\documentclass[10pt]{amsart}
\usepackage[latin1]{inputenc}
\usepackage{amsmath}
\usepackage{amsfonts}
\usepackage{amssymb}
\usepackage{amsthm}
\usepackage{graphicx,epsfig}
\usepackage{amscd}
\paperheight=29.7cm
\paperwidth=21cm

\DeclareMathOperator{\re}{Re}
\DeclareMathOperator{\im}{Im}

\DeclareMathOperator{\isomzero}{Isom_0}

\newcommand{\nil}{\mathrm{Nil}_3}

\newcommand{\su}{\mathrm{SU}_{1,1}}
\newcommand{\psu}{\mathrm{PSU}_{1,1}}
\newcommand{\psl}{\widetilde{\mathrm{PSL}}_2(\R)}

\newcommand{\D}{\mathbb{D}}
\newcommand{\M}{\mathbb{M}}
\newcommand{\R}{\mathbb{R}}
\newcommand{\h}{\mathbb{H}}

\newcommand{\C}{\mathbb{C}}

\newcommand{\LL}{\mathbb{L}}
\newcommand{\E}{\mathbb{E}}

\newcommand{\rmT}{\mathrm{T}}

\newcommand{\rmd}{\mathrm{d}}

\newcommand{\rmU}{\mathrm{U}}

\newcommand{\rmI}{\mathrm{I}}

\newcommand{\cS}{{\mathcal S}}

\newcommand{\cE}{{\mathcal E}}

\newcommand{\cA}{{\mathcal A}}

\def\flecha{\rightarrow}
\let\8=\infty
\def\S{\mathbb{S}}

\numberwithin{equation}{section}

\begin{document}

\newtheorem{thm}{Theorem}[section]
\newtheorem*{thmintro}{Theorem}
\newtheorem{cor}[thm]{Corollary}
\newtheorem{prop}[thm]{Proposition}
\newtheorem{app}[thm]{Application}
\newtheorem{lemma}[thm]{Lemma}
\newtheorem{notation}[thm]{Notations}
\newtheorem{hypothesis}[thm]{Hypothesis}

\newtheorem{defin}[thm]{Definition}
\newenvironment{defn}{\begin{defin} \rm}{\end{defin}}
\newtheorem{remk}[thm]{Remark}
\newenvironment{rem}{\begin{remk} \rm}{\end{remk}}
\newtheorem{exa}[thm]{Example}
\newenvironment{ex}{\begin{exa} \rm}{\end{exa}}
\newtheorem{cla}[thm]{Claim}
\newenvironment{claim}{\begin{cla} \rm}{\end{cla}}

\title{The Gauss map of surfaces in $\psl$}

\author{Beno\^\i t Daniel}
\address{Universit\'e de Lorraine\\
Institut \'Elie Cartan de Lorraine\\
UMR 7502\\
CNRS\\
B.P. 70239\\
F-54506 Vand\oe{}uvre-l\`es-Nancy cedex\\
FRANCE\\
and Korea Institute for Advanced Study\\
School of Mathematics\\
85 Hoegiro\\
Dongdaemun-gu\\
Seoul 130-722\\
REPUBLIC OF KOREA}
\email{benoit.daniel@univ-lorraine.fr}

\author{Isabel Fern\'andez}
\address{Departamento de Matem\'atica Aplicada I\\
Universidad de Sevilla\\
E-41012 Sevilla\\
SPAIN}
\email{isafer@us.es}

\author{Pablo Mira}
\address{Departamento de Matem\'atica Aplicada y Estad\'\i stica\\
Universidad Polit\'ecnica de Cartagena\\
E-30203 Cartagena\\
Murcia\\
SPAIN}
\email{pablo.mira@upct.es}

\date{}

\subjclass[2010]{Primary: 53C42, 53C30. Secondary: 53A10, 53C43}

\keywords{Homogeneous Riemannian manifold, constant mean curvature surface, harmonic map}

\begin{abstract}
We define a Gauss map for surfaces in the universal cover of the Lie group $\mathrm{PSL}_2(\R)$ endowed with a left-invariant Riemannian metric having a $4$-dimensional isometry group.  This Gauss map is not related to the Lie group structure. We  prove that the Gauss map of a nowhere vertical surface of critical constant mean curvature is harmonic into the hyperbolic plane $\h^2$ and we obtain a Weierstrass-type representation formula. This extends results in $\h^2\times\R$ and the Heisenberg group $\nil$, and completes the proof of existence of  harmonic Gauss maps for  surfaces of critical constant mean curvature in any homogeneous manifold diffeomorphic to $\R^3$ with isometry group of dimension at least $4$.
\end{abstract}

\maketitle

\section{Introduction}

An important property in the theory of constant mean curvature (CMC)
surfaces is that the Gauss map (i.e., the unit normal vector viewed
as a map into the $2$-sphere $\S^2$) of a CMC surface in $\R^3$ is a harmonic map
into $\S^2$. If the surface in $\R^3$ is minimal, the Gauss map is
holomorphic.

When we consider an immersed oriented surface $\Sigma$ in a
Riemannian three-manifold $M$, its unit normal vector is a map
$N:\Sigma\to\rmU M$ that takes values in the five-dimensional unit
tangent bundle $\rmU M$ of $M$. It is then unclear how to define for
$\Sigma$ a Gauss map into $\S^2$ that provides relevant information
about the surface. The construction of such a Gauss map will depend
on the geometric properties of $M$, and in general the Gauss map will not be
harmonic for CMC surfaces in $M$. Nonetheless, for some ambient
spaces $M$ such a Gauss map has been
constructed, and has provided key information about the global
properties of certain classes of surfaces in $M$.

An illustration of this fact is the case of CMC surfaces in hyperbolic $3$-space $\h^3$. There is a well known
way to define a ``hyperbolic Gauss map'' for surfaces in $\h^3$
\cite{epstein,bryant} taking values in the asymptotic boundary
$\partial_{\infty}\h^3\simeq\bar\C$ where $\bar\C=\C\cup\{\infty\}$
is the Riemann sphere. R. Bryant \cite{bryant} proved that this
hyperbolic Gauss map is holomorphic for CMC $1$ surfaces in $\h^3$
and he obtained a Weierstrass-type representation formula for these
surfaces; these techniques were then developed by M. Umehara and K.
Yamada \cite{uy} and many other authors, leading to very important
improvements in the study of these surfaces. See also \cite{galvezmathann,egm} for
other applications of the hyperbolic Gauss map.

Recently, much attention was drawn to surfaces in simply connected
homogeneous $3$-manifolds. The homogeneous spaces diffeomorphic to
$\R^3$ with a $4$-dimensional isometry group constitute a
two-parameter family; they will be denoted by $\mathbb{E}^3(\kappa,\tau)$ where $\kappa \leqslant 0$ and 
$(\kappa,\tau)\neq(0,0)$. When $\tau=0$ we get the product space
$\h^2 (\kappa)\times \R$; when $\kappa=0$ we obtain the $3$-dimensional Heisenberg
group ${\rm Nil}_3$ endowed with a left-invariant metric; otherwise we obtain the universal cover of
the Lie group ${\rm PSL}_2 (\R)$ endowed with a left-invariant
metric; we will denote it by $\psl$. Thus, we see that the metrics
of $\h^2\times\R$ and $\nil$ arise as limits of certain left
invariant metrics on $\psl$.

The second and third authors \cite{fmajm} defined a ``hyperbolic
Gauss map'' taking values in the hyperbolic plane $\h^2$ for
surfaces in $\h^2\times\R$ having a regular projection on $\h^2$;
they moreover proved that for CMC $1/2$ surfaces this Gauss map is
harmonic, and they obtained a Weierstrass-type representation
formula for these surfaces. Then, in  $\nil$, the first author
\cite{danielimrn} studied a Gauss map defined using the Lie group
structure and obtained obtained a Weierstrass-type representation
formula for minimal surfaces in $\nil$. These two Gauss maps have
quite different definitions and properties. Moreover, the use of
these Gauss maps has led to important discoveries in their
respective theories, such as solutions to Bernstein type problems,
classification of complete multigraphs or half-space theorems \cite{fmtams,dh}.

Let us indicate that minimal surfaces in $\R^3$ and $\nil$, CMC $1$
surfaces in $\h^3$ and CMC $1/2$ surfaces in $\h^2\times \R$ all
have \emph{critical constant mean curvature}. That is, their mean
curvature equals the value $c$ such that there exist compact CMC $H$
surfaces respectively in $\R^3$, $\nil$, $\h^3$, $\h^2\times \R$ if and only if
$|H|>c$. By the same definition, the value $c$ for the critical mean
curvature in $\psl =\mathbb{E}^3 (\kappa,\tau)$ for $\kappa<0$ and $\tau
\neq 0$, is $c=\sqrt{-\kappa}/2$. Moreover, CMC surfaces with
critical mean curvature in $\h^2\times\R$, $\nil$ and $\psl$ are
related by a Lawson-type correspondence \cite{danielcmh}.

The aims of this paper are:
\begin{itemize}
 \item to define geometrically a smooth Gauss map for smooth surfaces in $\psl$ taking values in a $2$-sphere, in a way that does not depend on a choice of model and such that the Gauss map is ``compatible'' with ambient isometries (in a sense that will be made precise),
\item to try to unify the definitions of the Gauss map of surfaces in $\h^2\times\R$, $\psl$ and $\nil$,
\item to prove that the Gauss maps of CMC local graphs with critical mean curvature in $\psl$ are harmonic maps into $\h^2$ (local graphs will be defined in Section \ref{sec:ekappatau}).
\end{itemize}

Let us first eliminate two approaches that could seem natural to solve this problem.


The first approach would be to define a left-invariant Gauss map using the Lie group structure, as it is implicitely done in $\R^3$ and as it was done in $\nil$ \cite{danielimrn} (see also \cite{dm}). Isometries coming from the Lie group structure in $\h^2\times\R$ and $\psl$ do not have a geometric characterization (see Remark \ref{rem:liegroup}) as in $\R^3$ and $\nil$ (where they are ``translations''). Moreover, the homogeneous manifold $\psl$ is isometric to another Lie group with a left-invariant metric (see Theorem 2.14 and Corollary 3.19 in \cite{mpliegroups} for a discussion of these Lie group structures), so defining a left-invariant Gauss map would depend on the choice of a Lie group structure. These left-invariant Gauss maps do not relate well to isometries. One can compute that, for surfaces with critical CMC, they satisfy some second order elliptic partial differential equations that are not harmonic map equations.


The second approach would be to define a Gauss map taking values in
the asymptotic boundary of the ambient manifold, as it is again
implicitly done in $\R^3$ and as it was done in $\h^3$. However,
even in $\h^2\times\R$ this is not suitable: indeed, $\h^2\times\R$
admits no differentiable Hadamard compactification \cite{kloeckner},
and hence the Gauss map of a smooth surface would not necessarily be
smooth.

The paper is organised as follows. In Section \ref{sec:ekappatau}, we introduce some preliminary material about the manifolds $\E^3(\kappa,\tau)$. In Section \ref{sec:gauss}, we prove the existence and the uniqueness of a Gauss map satisfying some geometric conditions; this Gauss map reaches the first two of the abovementioned aims of the paper. In Section \ref{sec:conformal}, we gather some preliminary computations related to conformal immersions and their Gauss map. Section \ref{sec:critical} is devoted to the Gauss map of surfaces with critical CMC: we prove that it is harmonic into $\h^2$ for local graphs and we obtain a Weierstrass-type representation theorem; hence this Gauss map reaches the third of the abovementioned aims. We also relate the Gauss map of the surface with the Gauss map of its sister minimal surface in $\nil$. 
In Section \ref{sec:lorentzian} we give the expression of the Gauss map  using the Lorentzian model for $\h^2$.

\section{The manifolds $\E^3(\kappa,\tau)$} \label{sec:ekappatau}

The simply connected homogeneous Riemannian $3$-manifolds with a
$4$-dimensional isometry group constitute, together with Euclidean
$3$-space and round $3$-spheres, a $2$-parameter family
$(\E^3(\kappa,\tau))_{(\kappa,\tau)\in\R^2}$, such that there exists
a Riemannian fibration $\pi:\E^3(\kappa,\tau)\to\M^2(\kappa)$ with
bundle curvature $\tau$, where $\M^2(\kappa)$ is the simply
connected surface of curvature $\kappa$. The fibration is a product
fibration if and only if $\tau=0$. We refer to \cite{danielcmh} and
references therein for more details.

There are different types of manifolds according to the values of $\kappa$ and $\tau$. In this paper we will only consider the case where
$$\kappa\leqslant0.$$ The manifold $\E^3(\kappa,\tau)$ is diffeomorphic to $\R^3$ and isometric
\begin{itemize}
 \item to Euclidean space $\R^3$ when $(\kappa,\tau)=(0,0)$; in this case the isometry group has dimension six, and the fibration $\pi$ is not unique,
\item to the $3$-dimensional Heisenberg group endowed with a left-invariant metric when $\kappa=0$ and $\tau\neq0$; we will denote it by $\nil(\tau)$ (up to dilations, all these metrics are isometric),
\item to $\h^2(\kappa)\times\R$ when $\kappa<0$ and $\tau=0$, where $\h^2(\kappa)$ is the hyperbolic plane of constant curvature $\kappa$,
\item to the universal cover of $\mathrm{PSL}_2(\R)$ endowed with certain left-invariant metrics when $\kappa<0$ and $\tau\neq0$; we will denote it by $\psl$ (up to dilations, all these metrics are isometric to one such that $\kappa-4\tau^2=-1$).
\end{itemize}

A vector $v$ is said to be \emph{vertical} if $\rmd\pi(v)=0$ and \emph{horizontal} if it is orthogonal to vertical vectors. We let $\xi$ be a unit vertical field in $\E^3(\kappa,\tau)$ (the field $\xi$ is unique up to multiplication by $-1$). We say that a vector $v$ is \emph{upwards pointing} (respectively, \emph{downwards pointing}) if $\langle v,\xi\rangle>0$ (respectively, $\langle v,\xi\rangle<0$).

We set $$c:=\frac{\sqrt{-\kappa}}2.$$ This constant is called the \emph{critical mean curvature} because there exist compact CMC $H$ surfaces in $\E$ if and only if $|H|>c$. When $\kappa<0$, \emph{horocylinders}, i.e. inverse images by $\pi$ of  horocycles of $\h^2(\kappa)$, have constant mean curvature $c$. When $\kappa=0$, \emph{vertical planes}, i.e. inverse images by $\pi$ of straight lines of $\R^2$, are minimal, and hence have constant mean curvature $c$. In particular, any $\E^3(\kappa,\tau)$ with $\kappa\leqslant0$ can be foliated by topological planes of critical CMC, all of them congruent to each other.

A surface is said to be \emph{nowhere vertical} (or a \emph{local graph}) if $\xi$ is nowhere tangent to it,
that is, if the restriction of $\pi$ to the surface is a local diffeomorphism (in $\h^2(\kappa)\times\R$ this means that the surface has a regular projection on $\h^2(\kappa)$); it is said to be an \emph{entire graph} if the restriction of $\pi$ to the surface is a global diffeomorphism onto $\M^2(\kappa)$.
The \emph{angle function} of an oriented surface $\Sigma$ is the function $\langle N,\xi\rangle$ where $N$ is the unit normal vector field to $\Sigma$. Hence, a surface is nowhere vertical if and only if its angle function does not vanish. CMC local graphs, and in particular entire graphs, play an important role among CMC surfaces; see for instance \cite{fmtams,hrs0,hrs2,dh,mrr,manzanopr,mazetpsl}.

For $\rho>0$ we set $$\D(\rho)=\{z\in\C;|z|<\rho\}.$$ We also set $$\D=\D(1).$$ We use the following model for the hyperbolic plane of constant curvature $\kappa$: $$\h^2(\kappa)=\D\left(\frac2{\sqrt{-\kappa}}\right)$$ endowed with the metric given in canonical coordinates $(x_1,x_2)$ by
$$\Lambda^2(\rmd x_1^2+\rmd x_2^2)$$
where $$\Lambda=\frac1{1+\frac\kappa4(x_1^2+x_2^2)}=\frac1{1-c^2|\zeta|^2}$$
with $$\zeta=x_1+ix_2.$$

The model we use for $\E^3(\kappa,\tau)$ is
$\D\left(\frac2{\sqrt{-\kappa}}\right)\times\R$ when $\kappa<0$ or $\R^3$ when $\kappa=0$, endowed with the metric given in canonical coordinates $(x_1,x_2,x_3)$ by
$$\Lambda^2(\rmd x_1^2+\rmd x_2^2)+(\tau\Lambda(x_2\rmd x_1-x_1\rmd x_2)+\rmd x_3)^2.$$
In this model we have $$\pi(x_1,x_2,x_3)=(x_1,x_2).$$

We consider the following orthonormal frame:
\begin{equation} \label{defframeV}
 V_1=\frac1\Lambda\frac\partial{\partial x_1}-\tau x_2\frac\partial{\partial x_3},\quad
V_2=\frac1\Lambda\frac\partial{\partial x_2}+\tau x_1\frac\partial{\partial x_3},\quad
V_3=\frac\partial{\partial x_3}=\xi.
\end{equation}
The fields $V_1$ and $V_2$ are the horizontal lifts in $\E^3(\kappa,\tau)$ by the fibration $\pi$ of the fields $\frac1\Lambda\frac\partial{\partial x_1}$ and $\frac1\Lambda\frac\partial{\partial x_2}$ in $\M^2(\kappa)$.


\section{Definition of the Gauss map} \label{sec:gauss}

In this section we set $$\kappa<0.$$ We will define a ``geometric'' Gauss map for surfaces in $\E^3(\kappa,\tau)$ that coincides with the hyperbolic Gauss map \cite{fmajm} for graphs in $\h^2(\kappa)\times\R$ when $\tau=0$ and such that, by taking a limit when $\kappa\to0$, we get the Gauss map defined for surfaces in $\nil$ using the Lie group structure studied in \cite{danielimrn}.

For simplicity we set $\E=\E^3(\kappa,\tau)$. We denote by $\isomzero(\E)$ the connected component of the identity in the isometry group of $\E$. All isometries in $\isomzero(\E)$ leave the field $\xi$ invariant, since the sectional curvature of the plane $\xi^\perp$ is a strict minimum (see \cite{danielcmh} for more details); as a consequence, a fiber is mapped to another fiber. An isometry $f\in\isomzero(\E)$ induces via $\pi$ an orientation-preserving isometry $\tilde f$ of $\h^2(\kappa)$, i.e., $\tilde f\circ\pi=\pi\circ f$; we will say that $\tilde f$ is the \emph{horizontal part} of $f$.
Moreover, the field $\xi$ is a Killing field; the isometries it generates are called \emph{vertical translations} and their horizontal part is the identity.

Let
$$\mathrm{SU}_{1,1}=\left\{\left(
\begin{array}{cc}
\alpha & \beta \\
\bar\beta & \bar\alpha \end{array}\right);
(\alpha,\beta)\in\C^2,|\alpha|^2-|\beta|^2=1\right\},$$
$$\psu=\su/\{\pm\rmI\}.$$

We let $\cS$ denote the Riemann sphere $\bar\C=\C\cup\{\infty\}$ together with a marked oriented circle $\cE$ in $\bar\C$. We  call $\cE$ the \emph{equator} of $\cS$. Then $\cS\setminus\cE$ has two connected components: we will call \emph{northern hemisphere} and denote by $\cS^+$ the one whose oriented boundary is $\cE$, and we will call \emph{southern hemisphere} and denote by $\cS^-$ the one whose oriented boundary is $\cE$ with the opposite orientation.

The connected group that acts
naturally on $\cS$ is the group of (orientation-preserving) conformal diffeomorphisms of $\bar\C$ that leave $\cE$ invariant and preserve the
orientation of $\cE$. This group is $\psu\simeq\mathrm{PSL}_2(\R)$, which is also the group of orientation-preserving isometries of $\h^2(\kappa)$. Hence to each $M\in\su$ we associate a conformal diffeomorphism $\psi_M:\cS\to\cS$ as above and an orientation-preserving isometry
$\varphi_M:\h^2(\kappa)\to\h^2(\kappa)$, so that
$$\psi_{MN}=\psi_M\circ\psi_N,\quad\quad
\varphi_{MN}=\varphi_M\circ\varphi_N.$$ In this way we obtain a unique conformal diffeomorphism $\sigma:\h^2(\kappa)\cup\partial_\infty\h^2(\kappa)\to\cS^+\cup\cE$ such that
$$\sigma\circ\psi_M=\varphi_M\circ\sigma$$ for all $M$.

Without loss of generality we can assume that $\cE$ is the circle $|z|=1$
oriented counter clockwise, the northern hemisphere is $$\cS^+=\{|z|<1\},$$ the southern hemisphere is $$\cS^-=\{|z|>1\}\cup\{\infty\},$$ and, for $$M=\left(\begin{array}{cc}
\alpha & \beta \\
\bar\beta & \bar\alpha \end{array}\right)\in\mathrm{SU}_{1,1},$$
  $\psi_M$ and $\varphi_M$ are defined by
$$ \psi_M(z):=\frac{\alpha z+\beta}{\bar\beta z+\bar\alpha},\quad\quad
\varphi_M(\zeta):=\frac1c\frac{\alpha c\zeta+\beta}
{\bar\beta c\zeta+\bar\alpha}.$$
In this way, we have $\sigma(z)=z/c$.

Inspired by the properties of the ``hyperbolic Gauss map'' of surfaces in hyperbolic $3$-space $\h^3$, we seek a Gauss map for oriented surfaces in $\E$ with values in $\cS$ having the following properties:
\begin{itemize}
\item two surfaces passing through the same point $x$ have the same Gauss map at $x$ if and only if they have the same unit normal vector at $x$,
\item applying an isometry $f$ of $\E$ belonging to the connected component of the identity switches the Gauss map $g$ of the surface to $\psi_M\circ g$, where  $M\in\su$ corresponds to the horizontal part of $f$,
\item the Gauss map at a point lies on the equator $\cE$ (respectively, in the northern hemisphere) if and only if the unit normal
vector at that point is horizontal (respectively, upwards pointing),
\end{itemize}

The first property implies that the Gauss map of an oriented surface $\Sigma$ factorizes as $g=\Pi\circ N$ for some map $\Pi:\rmU\E\to\cS$, where $\rmU\E$ denotes the unit tangent bundle to $\E$ and $N:\Sigma\to\rmU\E$  the unit normal vector to $\Sigma$.

\begin{thm} \label{thm:gauss}
There exists a unique map 
$$\begin{array}{llll}
\Pi: & \rmU\E & \to & \cS \\
& (x,Z) & \mapsto & \Pi_x(Z)
\end{array}$$ such that
\begin{itemize}
\item[(a)] for any $f\in\isomzero(\E)$, if $M\in\su$ is such that the horizontal part of $f$ is $\varphi_M$, then
$$\Pi\circ\rmd f=\psi_M\circ\Pi,$$
\item[(b)] for any $(x,Z)\in\rmU\E$, $\Pi_x(Z)\in\cE$ (respectively $\Pi_x(Z)\in\cS^+$, $\Pi_x(Z)\in\cS^-$) if and only if $Z$ is horizontal (respectively, upwards pointing, downwards pointing),
\item[(c)] for any point $x\in\E$ and any horizontal vector $Z\in\rmU_x\E$, $\Pi_x(Z)=\sigma(p)$ where $p$ denotes the endpoint of the oriented geodesic of $\h^2(\kappa)$ passing through $\pi(x)$ with $\rmd\pi(Z)$ as tangent vector,
\item[(d)] for any point $x\in\E$, the map $\Pi_x:\rmU_x\E\to\cS$ is a conformal diffeomorphism.
\end{itemize}

Moreover the map $\Pi$ is analytic, and its expression in the frame $(V_1,V_2,V_3)$ is given by
\begin{equation} \label{formulaPi}
\Pi_x(Z)=\frac{Z_1+iZ_2+c\zeta(1+Z_3)}{c\bar\zeta(Z_1+iZ_2)+1+Z_3}
\end{equation}
where $\zeta=x_1+ix_2$ and $Z=Z_1V_1+Z_2V_2+Z_3V_3$.
\end{thm}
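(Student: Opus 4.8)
\emph{Setup and uniqueness.} The plan is to prove uniqueness by reducing the Gauss map to its restriction over one ``central'' fiber, where (b)--(d) leave no freedom, and then to prove existence by extending equivariantly and reading off \eqref{formulaPi}. Let $G=\isomzero(\E)$, fix a point $x_0$ on the fiber over the center $o$ of the disk model of $\h^2(\kappa)$ (so $\zeta=0$ at $x_0$), and let $H\cong\mathrm{SO}(2)$ be its stabilizer in $G$, acting on $\rmU_{x_0}\E\cong\S^2$ as the rotations about the vertical axis; I will identify $\rmU_{x_0}\E$ with $\bar\C$ by the stereographic projection $s(Z)=(Z_1+iZ_2)/(1+Z_3)$ from the south pole. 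Since $\E$ is homogeneous, $G$ acts transitively, so for each $x$ there is $f\in G$ with $f(x_0)=x$, and (a) gives $\Pi_x=\psi_M\circ\Pi_{x_0}\circ(\rmd f)^{-1}$, where $\varphi_M$ is the horizontal part of $f$. Thus $\Pi$ is determined by $\Pi_{x_0}$, and it suffices to see that (b)--(d) force $\Pi_{x_0}=s$. Property (c) computes $\Pi_{x_0}$ on the horizontal equator: the oriented geodesic of $\h^2(\kappa)$ from $o$ with direction $\cos\theta\,V_1+\sin\theta\,V_2$ is a Euclidean ray, whose endpoint $\sigma$ sends to $e^{i\theta}\in\cE$, so $\Pi_{x_0}(\cos\theta\,V_1+\sin\theta\,V_2)=e^{i\theta}=s(\cos\theta\,V_1+\sin\theta\,V_2)$. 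By (d), $\Pi_{x_0}\circ s^{-1}$ is a conformal diffeomorphism of $\bar\C$ fixing the unit circle pointwise, hence either the identity or $z\mapsto1/\bar z$; the latter swaps the two hemispheres and is excluded by (b). Therefore $\Pi_{x_0}=s$, and $\Pi$ is unique.

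\emph{Existence.} I would realise $\rmU\E$ as the associated bundle $G\times_H\S^2$, under $[g,Z]\leftrightarrow\rmd g(Z)$, and set $\Pi(\rmd g(Z)):=\psi_{M(g)}(s(Z))$, where $\varphi_{M(g)}$ is the horizontal part of $g$. Since $g\mapsto M(g)$ is a homomorphism $G\to\psu$ and $\psi_{MN}=\psi_M\circ\psi_N$, property (a) will then hold automatically; the only point to check is well-definedness, that is, the $H$-equivariance of $s$. This is immediate: a rotation of angle $\phi$ about the vertical axis sends $s(Z)$ to $e^{i\phi}s(Z)$, its horizontal part is the rotation of $\h^2(\kappa)$ fixing $o$, and $\psi_{\mathrm{diag}(e^{i\phi/2},e^{-i\phi/2})}(w)=e^{i\phi}w$.

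\emph{The explicit formula and the remaining properties.} To obtain \eqref{formulaPi}, I would evaluate $\Pi$ at $x$ with $\pi(x)=\zeta$ using the transvection $f$ covering $\varphi_M$ with $M=\hat M(\zeta):=\sqrt\Lambda\left(\begin{smallmatrix}1 & c\zeta\\ c\bar\zeta & 1\end{smallmatrix}\right)\in\su$, so that $\varphi_M(\zeta')=(\zeta'+\zeta)/(c^2\bar\zeta\zeta'+1)$ and $\varphi_M(o)=\zeta$. The key simplification is that $\varphi_M'(o)=1-c^2|\zeta|^2>0$ is real, so $\rmd f$ carries the horizontal frame $(V_1,V_2)$ at $x_0$ to $(V_1,V_2)$ at $x$ without rotation (and $\rmd f(V_3)=V_3$); hence $\Pi_x(Z)=\psi_{\hat M(\zeta)}(s(Z))$, which is exactly \eqref{formulaPi}. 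This presentation makes (d) transparent, since $\hat M(\zeta)$ acts on $\bar\C$ as an element of $\psu$. For (b), writing $\Pi_x(Z)=\cN/\cD$ as in \eqref{formulaPi} and using $Z_1^2+Z_2^2=1-Z_3^2$, a direct computation gives $|\cN|^2-|\cD|^2=-2Z_3(1+Z_3)(1-c^2|\zeta|^2)$, whose sign is that of $-Z_3$, so $|\Pi_x(Z)|$ is smaller or larger than $1$ according as $Z$ is upwards or downwards pointing, and equals $1$ exactly when $Z$ is horizontal. Property (c) then follows for every $x$ from its validity at $x_0$ together with (a), because $\varphi_M$ maps geodesics to geodesics and endpoints to endpoints, and $\sigma$ intertwines $\varphi_M$ with $\psi_M$ on $\partial_\infty\h^2(\kappa)$. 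Finally, analyticity is immediate from \eqref{formulaPi}, a rational expression in $(\zeta,Z)$ valued in $\bar\C$.

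\emph{Main obstacle.} The delicate input is the geometry of $\E^3(\kappa,\tau)$ feeding the existence step: one must use that $G$ surjects onto $\mathrm{Isom}^+(\h^2)\simeq\psu$ with kernel the vertical translations, that an isometry $f\in G$ covering $\varphi_M$ acts on horizontal vectors as the horizontal lift of $\rmd\varphi_M$ (so the frame rotation is governed by $\arg\varphi_M'$) and fixes $\xi$. Making these statements precise in the model \eqref{defframeV}, where the bundle curvature $\tau$ couples the horizontal and vertical directions through the $\tau$-terms of the metric, is the part that requires genuine care; once it is in place, the real-positivity of $\varphi_M'(o)$ for the natural transvection is what kills the frame rotation and yields the clean formula \eqref{formulaPi}.
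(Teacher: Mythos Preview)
Your proof is correct and follows essentially the same strategy as the paper's: reduce to determining $\Pi$ at a single base point, show there it must be the stereographic projection $s$, and then propagate by equivariance to obtain \eqref{formulaPi}. The differences are only cosmetic: for uniqueness at $x_0$ you use (c) to pin down $\Pi_{x_0}$ on the equator pointwise (whereas the paper first uses (a) with rotations to reduce $h=\Pi_{x_0}\circ s^{-1}$ to a rotation), and for the explicit formula you pick the specific transvection with $\varphi_M'(o)>0$ real so that the horizontal frame is carried without rotation, which is a slightly cleaner bookkeeping than the paper's computation with a general isometry $f(y)=O$.
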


\begin{proof}
We first notice that proving existence and uniqueness of $\Pi$ is equivalent to proving that there exists a unique map $\Pi:\rmU\E\to\cS$ satisfying property (a) and properties (b), (c) and (d) at one point, for instance at the origin $O=(0,0,0)\in\E$.

We first prove the uniqueness of $\Pi_O$. At the origin the frame $(V_1,V_2,V_3)$ is just the coordinate frame. We let $s:\rmU_O\E\to\cS$ denote the stereographic projection with respect to the South Pole, i.e., if $Z=Z_1V_1+Z_2V_2+Z_3V_3\in\rmU_O\E$, then $$s(Z)=\frac{Z_1+iZ_2}{1+Z_3}.$$ So, by property (d) there exists a conformal diffeomorphism $h:\cS\to\cS$ such that $\Pi_O=h\circ s$.

By property (b), $h(\cS^+)=\cS^+$. Moreover, property (a) holds for all isometries $f\in\isomzero(\E)$ preserving $O$. Applying this property to the vector $Z=V_3$, we obtain that for all rotations $R$ around $0\in\cS$ we have $h(0)=R(h(0))$, which implies that $h$ is a rotation around $0\in\cS$. Finally, using property (c) we conclude that $h$ is the identity, and hence $\Pi_O=s$.

Observe that it is easy to check that $\Pi_O=s$ satisfies (b), (c), (d) at $x=O$ and also (a) for all isometries $f\in\isomzero(\E)$ preserving $O$.

We now prove existence and uniqueness of $\Pi$, as well as the announced expression \eqref{formulaPi} for $\Pi$.

We consider a point $y=(y_1,y_2,y_3)\in\E$ and an isometry $f\in\isomzero(\E)$ such that $f(y)=O$. To $f$ is associated a matrix $M=\left(\begin{array}{cc}
\alpha & \beta \\
\bar\beta & \bar\alpha \end{array}\right)\in\su$ such that
$$\beta=-\alpha cw$$ where $w=y_1+iy_2$. Let $$Z=Z_1V_1(y)+Z_2V_2(y)+Z_3V_3(y)\in\rmU_y\E$$ and set
$$\rmd f(Z)=Y_1V_1(0)+Y_2V_2(0)+Y_3V_3(0).$$ Since isometries in $\isomzero(\E)$ preserve the field $\xi$, we have $\langle\rmd f(Z),\xi(O)\rangle=\langle Z,\xi(y)\rangle$, i.e., $Y_3=X_3$.
On the other hand, since $\varphi_M$ is holomorphic, using complex
notation in $\h^2(\kappa)$ we have $\rmd\varphi_M =\varphi_M' (\rmd x_1+i\rmd x_2)$.


Consequently, since $\Lambda(0)=1$ and $$\rmd\pi(\rmd
f(Z))=\rmd\varphi_M(\rmd\pi(Z)),$$ we get
$$Y_1+iY_2=\frac{\varphi'_M(w)}{\Lambda(w)}(Z_1+iZ_2)
=\frac\alpha{\bar\alpha}(Z_1+iZ_2).$$
From this we deduce that
$$\Pi_O(\rmd f(Z))=\frac{Y_1+iY_2}{1+Y_3}=\frac\alpha{\bar\alpha}\frac{Z_1+iZ_2}{1+Z_3}.$$
Then by property (a) we have
$$\Pi_y(Z)=\psi_M^{-1}(\Pi_O(\rmd f(Z)))
=\frac{\bar\alpha\Pi_O(\rmd f(Z))-\beta}{-\bar\beta\Pi_O(\rmd f(Z))+\alpha}
=\frac{Z_1+iZ_2+cw(1+Z_3)}{c\bar w(Z_1+iZ_2)+1+Z_3}.$$


Let us observe that this expression does not depend on the choice of the isometry $f$; this is because $\Pi_O$ satisfies (a) for all isometries in $\isomzero(\E)$ preserving $O$. This gives the uniqueness of $\Pi$ as well as the announced expression, from which we also deduce analyticity. Conversely, by construction this map satisfies (a), (b), (c) and (d).
\end{proof}

\begin{defn} \label{defn:gauss}
Let $\Sigma$ be an oriented surface in $\E$. Then the \emph{Gauss map} of $\Sigma$ is the map $\Pi\circ N:\Sigma\to\cS$ where $N$ is the unit normal vector to $\Sigma$.
\end{defn}

\begin{rem}
Though we have used a specific model for $\E$ to prove Theorem \ref{thm:gauss}, its statement about existence and uniqueness of $\Pi:\rmU\E\to\cS$ satisfying conditions (a)-(d) does not involve a model for $\E$ but only needs the choice of an isomorphism between the group of orientation-preserving isometries of $\h^2(\kappa)$ and the group of orientation-preserving conformal diffeomorphisms of $\cS$ that leave $\cE$ invariant and preserve the orientation of $\cE$. In particular, Definition \ref{defn:gauss} is also independent of the model chosen for $\E$.
\end{rem}

If we consider the specific model for $\E$ we are working with in this paper, we get the following expression for the Gauss map.

\begin{cor} \label{ginframe}
If  $N=N_1V_1+N_2V_2+N_3V_3$, then the Gauss map of $\Sigma$ is
\begin{equation} \label{eqn:ginframe}
 g=\frac{N_1+iN_2+c\zeta(1+N_3)}{c\bar\zeta(N_1+iN_2)+1+N_3}
\end{equation}
with $\zeta=x_1+ix_2$.
\end{cor}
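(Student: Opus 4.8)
The plan is to derive the corollary directly from Theorem \ref{thm:gauss} by specializing the general formula \eqref{formulaPi} for $\Pi$ to the particular tangent vector given by the unit normal. Recall that by Definition \ref{defn:gauss}, the Gauss map of $\Sigma$ is $g=\Pi\circ N$, so at a point $x\in\Sigma$ with $\pi(x)=(x_1,x_2)$ we simply substitute $Z=N$ into $\Pi_x$. The key observation is that formula \eqref{formulaPi} expresses $\Pi_x(Z)$ in terms of the frame coordinates $Z_1,Z_2,Z_3$ of $Z=Z_1V_1+Z_2V_2+Z_3V_3$ together with $\zeta=x_1+ix_2$.

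First I would write $N=N_1V_1+N_2V_2+N_3V_3$ in the orthonormal frame $(V_1,V_2,V_3)$ from \eqref{defframeV}; this is legitimate since $N$ is a unit tangent vector to $\E$ at $x$, hence an element of $\rmU_x\E$. Then I would apply Theorem \ref{thm:gauss} with the choice $Z=N$, so that $Z_1=N_1$, $Z_2=N_2$, $Z_3=N_3$. Substituting these into \eqref{formulaPi} gives
$$\Pi_x(N)=\frac{N_1+iN_2+c\zeta(1+N_3)}{c\bar\zeta(N_1+iN_2)+1+N_3},$$
which is exactly \eqref{eqn:ginframe}. Since $g=\Pi\circ N$ by definition, this establishes the claimed expression.

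There is essentially no obstacle here: the corollary is a direct specialization of the main theorem, and the only thing to verify is that the substitution is carried out in the same orthonormal frame used to state \eqref{formulaPi}, namely $(V_1,V_2,V_3)$. This is precisely the frame in which $N$ has been decomposed, so the bookkeeping matches automatically. The one point worth remarking is that the formula is valid pointwise for each $x\in\Sigma$, with $\zeta=x_1+ix_2$ depending on the base point through the fibration $\pi(x_1,x_2,x_3)=(x_1,x_2)$; thus $g$ is a genuine function on $\Sigma$ once we regard $N$ and $\zeta$ as functions of the surface point. No further analysis is required.
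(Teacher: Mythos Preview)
Your proof is correct and is exactly the argument the paper has in mind: the corollary is stated without proof because it follows immediately from Definition~\ref{defn:gauss} and the explicit formula~\eqref{formulaPi} in Theorem~\ref{thm:gauss} by substituting $Z=N$.
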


\begin{rem}
In the case of $\h^2\times\R$, the hyperbolic Gauss map defined in \cite{fmajm} for CMC $1/2$ graphs can be written as $\Pi\circ N$ with $\Pi$ satisfying conditions (a)-(d) in Theorem \ref{thm:gauss}, so it coincides with $g$.
\end{rem}

\begin{rem}
The formula given in Corollary \ref{ginframe} extends for $\kappa=0$, and in this case we get the Gauss map studied in \cite{danielimrn}. Hence formula \eqref{eqn:ginframe} provides a unified treatment for Gauss maps in all manifolds $\E^3(\kappa,\tau)$ with $\kappa\leqslant0$.
\end{rem}

\begin{ex}
Let $\gamma$ be a curve of constant curvature $k$ in $\h^2(\kappa)$ and let $\Sigma=\pi^{-1}(\gamma)\subset\E$. Then, by property (b) of Theorem \ref{thm:gauss}, the image of the Gauss map of $\Sigma$ lies in is the equator $\cE$, since the normal to $\Sigma$ is horizontal. When $|k|>1$, it is the whole $\cE$; when $|k|=1$ (in which case $\Sigma$ is a horocylinder), it is a point in $\cE$ or its complement (depending on the orientation); when $|k|<1$, it is an open arc in $\cE$. More generally, the Gauss map of a surface $\Sigma\subset\E$ takes values in $\cE$ if and only if the normal to $\Sigma$ is everywhere horizontal, i.e., if and only if $\pi(\Sigma)\subset\h^2(\kappa)$ is a curve. 
\end{ex}

\begin{rem} \label{rem:graph}
By property (b) of Theorem \ref{thm:gauss}, the Gauss map of a surface $\Sigma\subset\E$ takes values in $\D$ if and only if $\Sigma$ is a nowhere vertical surface with upwards pointing normal. 
\end{rem}

\begin{rem} \label{rotation}
Let $r:\E\to\E$ be the map defined by $r(x_1,x_2,x_3)=(x_1,-x_2,-x_3)$. This map is an isometry of $\E$ (but does not lie in $\isomzero(\E)$); it is the rotation of angle $\pi$ around the $x_1$-axis, which is a horizontal geodesic. Let $x\in E$ and $X\in\rmU_x\E$. Then
$$\Pi_{r(x)}(\rmd r(X))=\frac1{\Pi_x(X)}.$$ Indeed, this follows from \eqref{formulaPi} with $(\zeta,X_1,X_2,X_3)$ replaced by $(\bar\zeta,X_1,-X_2,-X_3)$.
\end{rem}

\begin{rem} \label{rem:liegroup}
The homogeneous manifold $\psl$ is isometric to the universal cover of $\rmU\h^2(\kappa)$ endowed with a Sasaki metric (see \cite{danielcmh} for details). Via this identification, the Riemannian fibration $\pi$ is the canonical projection onto $\h^2(\kappa)$, and isometries of $\psl$ coming from the Lie group structure (i.e., left-multiplications) are of the form $(f,\rmd f)$ where $f$ is a direct isometry of $\h^2(\kappa)$. For example, among all screw motions around the $x_3$-axis in $\psl$, those coming from the Lie group structure are exactly those whose pitch has a particular value that depends on $\kappa$ and $\tau$. Vertical translations do not come from the Lie group structure.
\end{rem}


\section{Preliminary calculations} \label{sec:conformal}

This section is devoted to preliminary equations for conformal  immersions in $\E=\E^3(\kappa,\tau)$ where $$\kappa\leqslant0$$ and their Gauss map. 

\subsection{Expression of the connection of $\E$}

First we compute the connection $\hat\nabla$ of $\E$ in the frame $(V_1,V_2,V_3)$. We have
$$[V_1,V_2]=-\frac\kappa2x_2V_1+\frac\kappa2x_1V_2+2\tau V_3,\quad
[V_1,V_3]=[V_2,V_3]=0,$$
and so
\begin{equation} \label{connection}
\begin{array}{lll}
\hat\nabla_{V_1}V_1=\frac\kappa2x_2V_2, &
\hat\nabla_{V_2}V_1=-\frac\kappa2x_1V_2-\tau V_3, &
\hat\nabla_{V_3}V_1=-\tau V_2, \\
\hat\nabla_{V_1}V_2=-\frac\kappa2x_2V_1+\tau V_3, &
\hat\nabla_{V_2}V_2=\frac\kappa2x_1V_1, &
\hat\nabla_{V_3}V_2=\tau V_1, \\
\hat\nabla_{V_1}V_3=-\tau V_2, &
\hat\nabla_{V_2}V_3=\tau V_1, &
\hat\nabla_{V_3}V_3=0.
\end{array}
\end{equation}

\subsection{Conformal  immersions} \label{sec:immersions}

Let $X=(x_1,x_2,x_3):\Sigma\to\E$ be a conformal immersion from a Riemann surface $\Sigma$ into $\E$. We shall denote by $N:\Sigma\to\rmU\E$ its unit normal.
If we fix a conformal coordinate $z=u+iv$ in $\Sigma$, then we have $$\langle X_z,X_{\bar z}\rangle=\frac{\lambda}2>0, \quad \langle X_z,X_z\rangle=0,$$ where $\lambda$ is the conformal factor of the metric with respecto to $z$. Moreover, we will denote the coordinates of $X_z$ and $N$ in the  frame $(V_1,V_2,V_3)$ by
$$X_z=\left[\begin{array}{c} A_1 \\ A_2 \\ A_3 \end{array}\right],\quad
N=\left[\begin{array}{c} N_1 \\ N_2 \\ N_3 \end{array}\right].$$
The function $N_3$ is the angle function (see Section \ref{sec:ekappatau}).

The usual \emph{Hopf differential} of $X$, i.e., the $(2,0)$ part of its complexified second fundamental form, is defined as $$P\rmd z^2=\langle N,\hat\nabla_{X_z}{X_z}\rangle\rmd z^2.$$
From the definitions, we have the basic algebraic relations
\begin{equation} \label{sumAk}
\left\{\def\arraystretch{1.4}\begin{array}{l}
|A_1|^2+|A_2|^2+|A_3|^2=\displaystyle \frac{\lambda}2,\quad \\
A_1^2+A_2^2+A_3^2=0,\quad \\
N_1^2+N_2^2+N_3^2=1,\quad \\
A_1N_1+A_2N_2+A_3N_3=0.
\end{array}\right.
\end{equation}
A classical computation proves that the Gauss-Weingarten equations of the immersion read as
\begin{equation*}
\left\{\def\arraystretch{1.9}\begin{array}{l}
\hat\nabla_{X_z}X_z=\displaystyle \frac{\lambda_z}{\lambda}X_z+PN,\quad \\
\hat\nabla_{X_{\bar z}}X_z= \displaystyle \frac{\lambda H}2N,\quad \\
\hat\nabla_{X_{\bar z}}X_{\bar z}=\displaystyle\frac{\lambda_{\bar z}}{\lambda}X_{\bar z}+\bar PN, \\
 \end{array}\right. \hspace{1cm}
\left\{\def\arraystretch{1.9}\begin{array}{l}
\hat\nabla_{X_z}N=-HX_z-\displaystyle\frac{2P}{\lambda}X_{\bar z},\quad \\
\hat\nabla_{X_{\bar z}}N=-\displaystyle\frac{2\bar P}{\lambda}X_z-HX_{\bar z}, \\
 \end{array}\right.
\end{equation*}
where $H$ is the mean curvature function of $X$.

Using \eqref{connection} in these equations we get
\begin{equation} \label{diffAkz}
\left\{\def\arraystretch{1.9} \begin{array}{l}
\displaystyle{A_{1\bar z}=\frac{\lambda H}2N_1+\frac\kappa2x_2\bar A_1A_2
-\frac\kappa2x_1|A_2|^2-\tau(\bar A_2A_3+A_2\bar A_3),} \\
\displaystyle{A_{2\bar z}=\frac{\lambda H}2N_2+\frac\kappa2x_1A_1\bar A_2
-\frac\kappa2x_2|A_1|^2+\tau(\bar A_1A_3+A_1\bar A_3),} \\
\displaystyle{A_{3\bar z}=\frac{\lambda H}2N_3+\tau(A_1\bar A_2-\bar A_1A_2).}
\end{array}\right.
\end{equation}
\begin{equation} \label{N3z}
N_{3z}=-HA_3-\frac{2P}{\lambda}\bar A_3+\tau(A_2N_1-A_1N_2).
\end{equation}
Moreover, the fact that $X_z\times X_{\bar z}=i\frac{\lambda}2N$ implies that
\begin{equation*} 
N_1=-\frac{2i}{\lambda}(A_2\bar A_3-A_3\bar A_2),\quad
N_2=-\frac{2i}{\lambda}(A_3\bar A_1-A_1\bar A_3),\quad
N_3=-\frac{2i}{\lambda}(A_1\bar A_2-A_2\bar A_1).
\end{equation*}

\subsection{An auxiliary map}


We now introduce the auxiliary  map
$$G:=\frac{N_1+iN_2}{1+N_3},$$ so that
\begin{equation}\label{tres55}
N=\frac1{1+|G|^2}\left[\begin{array}{c} 2\re G \\ 2\im G \\ 1-|G|^2
          \end{array}\right].\end{equation}

\begin{rem} \label{rem:modulusG}
This map $G$ does not have a geometric meaning since it depends on the choice of the frame $(V_1,V_2,V_3)$; however $|G|$ has a geometric meaning, since it depends only on the angle function of the immersion. In particular, $X$ defines a local graph with upwards pointing normal if and only if $|G|<1$.
\end{rem}

Let us set $$\eta:=2\langle X_z,\xi\rangle = 2 A_3.$$ A straightforward computation from \eqref{sumAk} proves that $\eta/\bar G$ extends smoothly at points where $G=0$ and that
\begin{equation} \label{asubis}
 A_1=-\frac{1-\bar G^2}{4\bar G}\eta,\quad
A_2=i\frac{1+\bar G^2}{4\bar G}\eta,\quad A_3=\frac{\eta}2
 \end{equation} and thereby
\begin{equation} \label{landa}
 \lambda=(1+|G|^2)^2\frac{|\eta|^2}{4|G|^2}.
  \end{equation}


 Then, denoting $\zeta=x_1+ix_2:\Sigma\to\C$,  \eqref{diffAkz} becomes
\begin{equation} \label{1line}
\begin{array}{l}
\displaystyle{(\bar G^2+1)\frac{\eta\bar G_{\bar z}}{4\bar G^2}+\frac14\left(\bar G-\frac1{\bar G}\right)\eta_{\bar z}} \\
\quad
=\displaystyle{(1+|G|^2)\frac{|\eta|^2}{4|G|^2}H\re(G)
-\kappa\frac{|\eta|^2}{32|G|^2}(1+\bar G^2)(\zeta+G^2\bar\zeta)
+i\tau\frac{|\eta|^2}8\left(\frac1G-\frac1{\bar G}+G-\bar G\right),}
\end{array}\end{equation}
\begin{equation} \label{2line}
\begin{array}{l}
\displaystyle{i(\bar G^2-1)\frac{\eta\bar G_{\bar z}}{4\bar G^2}+\frac i4\left(\bar G+\frac1{\bar G}\right)\eta_{\bar z}} \\
\quad
=\displaystyle{(1+|G|^2)\frac{|\eta|^2}{4|G|^2}H\im(G)
+i\kappa\frac{|\eta|^2}{32|G|^2}(1-\bar G^2)(\zeta+G^2\bar\zeta)
+\tau\frac{|\eta|^2}8\left(-\frac1G-\frac1{\bar G}+G+\bar G\right),}
\end{array}\end{equation}
\begin{equation} \label{3line}
\frac12\eta_{\bar z}=(H+i\tau)\frac{|\eta|^2}{8|G|^2}(1-|G|^4).\end{equation}
Reporting \eqref{3line} into \eqref{1line} + $i$ \eqref{2line} gives
$$\frac{G\bar G_{\bar z}}{2\bar\eta}=\frac H8(1+|G|^2)^2+\frac{i\tau}8(1-|G|^2)^2
-\frac\kappa{16}\bar G(\zeta+G^2\bar\zeta),$$
i.e.
\begin{equation} \label{formulaeta}
\eta=\frac{4\bar GG_z}{U(G,\zeta)}
\end{equation}
where
$$U(G,\zeta)=H(1+|G|^2)^2-i\tau(1-|G|^2)^2+2c^2G(\bar\zeta+\bar G^2\zeta).$$

\subsection{Preliminary formulas for the Gauss map}

Using \eqref{asubis} and \eqref{defframeV} we first have
\begin{equation} \label{zetaz}
\zeta_z=\frac{A_1+iA_2}{\Lambda(\zeta)}=-(1-c^2|\zeta|^2)\frac\eta{2\bar G},
\end{equation}
\begin{equation} \label{barzetaz}
\bar\zeta_z=\frac{A_1-iA_2}{\Lambda(\zeta)}=(1-c^2|\zeta|^2)\frac{\bar G\eta}2=-\bar G^2\zeta_z.
\end{equation}

By Corollary \ref{ginframe}, the Gauss map $g$ and the auxiliary map $G$ are related by
\begin{equation} \label{gG}
g=\frac{G+c\zeta}{c\bar\zeta G+1},\quad\quad G=\frac{g-c\zeta}{-c\bar\zeta g+1},
\end{equation}
from where we also get
\begin{equation} \label{modulusG}
 1-|G|^2=\frac{(1-c^2|\zeta|^2)(1-|g|^2)}{(1-c\bar\zeta g)(1-c\zeta\bar g)}.
\end{equation}
\begin{equation} \label{zetaG}
\bar\zeta+\bar G^2\zeta=
\frac{(\bar\zeta+\bar g^2\zeta)(1+c^2|\zeta|^2)-4c|\zeta|^2\bar g}{(1-c\zeta\bar g)^2}.
\end{equation}

\subsection{Preliminary formulas for the derivatives of the Gauss map}

Differentiating the left equation in \eqref{gG} we get
\begin{eqnarray*}
\frac{g_z}g & = & \frac{G_z+c\zeta_z}{G+c\zeta}-c\frac{\bar\zeta_zG+\bar\zeta G_z}{c\bar\zeta G+1} \\
& = & \frac{1-c^2|\zeta|^2}{(G+c\zeta)(c\bar\zeta G+1)}G_z+c\frac{\zeta_z}{G+c\zeta}
-c\frac{G\bar\zeta_z}{c\bar\zeta G+1}.
\end{eqnarray*}
Using \eqref{zetaz}, \eqref{barzetaz} and \eqref{formulaeta} we obtain
\begin{eqnarray*}
\frac{g_z}g & = & \frac{1-c^2|\zeta|^2}{(G+c\zeta)(c\bar\zeta G+1)}\frac{\eta U(G,\zeta)}{4\bar G}
-c(1-c^2|\zeta|^2)\left(\frac\eta{2\bar G(G+c\zeta)}+\frac{|G|^2\eta}{2(c\bar\zeta G+1)}\right) \\
& = & \frac{(1-c^2|\zeta|^2)\eta}{4\bar G(G+c\zeta)(c\bar\zeta G+1)}
(U(G,\zeta)-2c(c\bar\zeta G+1)-2cG\bar G^2(G+c\zeta)) \\
& = & \frac{(1-c^2|\zeta|^2)\eta}{4\bar G(G+c\zeta)(c\bar\zeta G+1)}V(G)
\end{eqnarray*}
with
\begin{equation} \label{defV}
V(G)=(H-c)(1+|G|^2)^2-(c+i\tau)(1-|G|^2)^2.
\end{equation}

On the other hand, differentiating the right equation in \eqref{gG} and using \eqref{zetaz} and \eqref{barzetaz} we get
\begin{equation} \label{diffbarG}
\frac{\bar G_z}{\bar G}=\frac{1-c^2|\zeta|^2}{(\bar g-c\bar\zeta)(1-c\zeta\bar g)}
\left(\bar g_z-c\eta\bar g+\frac{c^2\eta}2(\bar\zeta+\bar g^2\zeta)\right).
\end{equation}

\section{CMC surfaces with critical mean curvature} \label{sec:critical}

We recall that there exist compact CMC $H$ surfaces in $\E$ if and only if $|H|>c$. Indeed, there exist CMC $H$ spheres if $|H|>c$ (see for instance \cite{ar1,torralbo,mercuri}), and since horocylinders (vertical planes when $\kappa=0$) have mean curvature $c$, by the maximum principle there cannot exist compact CMC $H$ surfaces if $|H|\leqslant c$. CMC surfaces with mean curvature $c$ are called \emph{CMC surfaces with critical mean curvature}.

In this section we focus on CMC surfaces with critical mean curvature, and we assume that they are oriented by their mean curvature vector when $\kappa<0$, i.e., we set $H=c$. The results of these section generalize those of \cite{danielimrn} (which correspond to the case where $\kappa=0$ and where $\tau$ has been normalized to $1/2$).

\subsection{Harmonicity of the Gauss map}

\begin{thm} \label{rep}
Let $X=(x_1,x_2,x_3):\Sigma\to\E$ be a CMC immersion  with critical mean curvature. Assume that $X$ is nowhere vertical and with upwards pointing unit normal vector. Let $g:\Sigma\to\D$ be its Gauss map (see Remark \ref{rem:graph}). Then, $g$ is nowhere antiholomorphic, i.e., $g_z\neq 0$ at every point for any local conformal parameter $z$ on $\Sigma$, and $g$ verifies the second order elliptic equation
\begin{equation} \label{eqg2}
(1-|g|^2)g_{z\bar z}+2\bar gg_z\bar g_{\bar z}=0.
\end{equation}
In other words, $g$ is harmonic in $\D$ endowed with the hyperbolic metric (of curvature $-1$) $4|\rmd w|^2/(1-|w|^2)^2$.

Moreover, the immersion $X=(x_1,x_2,x_3):\Sigma\to\E$ can be recovered in terms of the Gauss map $g$ by means of the representation formula
\begin{equation} \label{repfor}
\left\{\begin{array}{lll}
\zeta_z & = & \displaystyle{\frac2{c+i\tau}\frac{(1-c\zeta\bar g)^2}{(1-|g|^2)^2}g_z,} \\
\zeta_{\bar z} & = &
\displaystyle{-\frac2{c-i\tau}\frac{(g-c\zeta)^2}{(1-|g|^2)^2}\bar g_{\bar z},} \\
(x_3)_z & = & \displaystyle{-\frac2{c+i\tau}\frac{(\bar g-c\bar\zeta)(1-c\zeta\bar g)}{(1-c^2|\zeta|^2)(1-|g|^2)^2}g_z
+\frac{i\tau}2\frac{\zeta\bar\zeta_z-\bar\zeta\zeta_z}{1-c^2|\zeta|^2},}
\end{array}\right.
\end{equation}
where $$\zeta=x_1+ix_2.$$

Conversely, assume that a map $g:\Sigma\to\D$ from a simply
connected Riemann surface $\Sigma$ verifies \eqref{eqg2} and is
nowhere antiholomorphic. Then the map $X:\Sigma\to\E$ given by the
representation formula \eqref{repfor} is a  conformal CMC immersion
with critical mean curvature whose Gauss map is $g$.
\end{thm}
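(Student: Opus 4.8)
The plan is to prove the two implications separately, reducing both to algebraic consequences of the structure equations of Section \ref{sec:conformal} once the critical value $H=c$ is inserted.

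\emph{Direct statement.} Setting $H=c$ in \eqref{defV} collapses the polynomial $V$ to $V(G)=-(c+i\tau)(1-|G|^2)^2$, so the formula for $g_z$ preceding \eqref{defV} becomes
$$g_z=-\frac{(c+i\tau)(1-c^2|\zeta|^2)\,(1-|G|^2)^2}{4\bar G(G+c\zeta)(c\bar\zeta G+1)}\,\eta\,g.$$
First I would record the elementary identities $G+c\zeta=g(1-c^2|\zeta|^2)/(1-c\bar\zeta g)$ and $c\bar\zeta G+1=(1-c^2|\zeta|^2)/(1-c\bar\zeta g)$, both immediate from \eqref{gG}, together with $\bar G=(\bar g-c\bar\zeta)/(1-c\zeta\bar g)$ and \eqref{modulusG}. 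Using \eqref{zetaz} to eliminate $\eta$ in favour of $\zeta_z$ and substituting, every factor of $\eta$, $\bar G$ and $1-c^2|\zeta|^2$ cancels and one is left with the first line of \eqref{repfor}. The second line is then obtained by conjugating the first and inserting it into \eqref{barzetaz}, written as $\zeta_{\bar z}=-G^2\bar\zeta_{\bar z}$, via $G(1-c\bar\zeta g)=g-c\zeta$. For the third line I would expand $X_z$ in the coordinate frame using \eqref{defframeV}, which gives $(x_3)_z=A_3+\tau(x_1A_2-x_2A_1)$; the term $\tau(x_1A_2-x_2A_1)$ equals $\tfrac{i\tau}2(\zeta\bar\zeta_z-\bar\zeta\zeta_z)/(1-c^2|\zeta|^2)$, while $A_3=\eta/2$ reproduces the $g_z$-term after the same substitutions. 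Nowhere-antiholomorphicity follows at once: for an upward local graph $|G|<1$, so the Jacobian of $\pi|_\Sigma$ equals $|\zeta_z|^2(1-|G|^4)$ by \eqref{barzetaz} and is nonzero, whence $\zeta_z\neq0$; and the first line of \eqref{repfor} exhibits $g_z$ as a nonvanishing multiple of $\zeta_z$ (note $1-c\zeta\bar g\neq0$ since $c|\zeta|<1$ and $|g|<1$).

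The heart of the matter is harmonicity. Writing $g_z=-\tfrac{c+i\tau}4(1-|g|^2)^2\Phi$ with $\Phi:=(1-c^2|\zeta|^2)\eta/\big((1-c\zeta\bar g)(\bar g-c\bar\zeta)\big)$, a direct manipulation shows that \eqref{eqg2} is equivalent to the single scalar identity
$$\frac{\Phi_{\bar z}}\Phi=\frac{2g\,\bar g_{\bar z}}{1-|g|^2}.$$
I would prove this by logarithmic differentiation of $\Phi$: the term $\eta_{\bar z}/\eta$ is furnished precisely by the critical-CMC structure equation \eqref{3line} (here $H=c$ enters essentially, not merely as a simplification), and the remaining terms are rewritten using $\zeta_z$, $\bar\zeta_z$ from \eqref{zetaz}--\eqref{barzetaz}, their conjugates, and $\bar g_{\bar z}=\overline{g_z}$. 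After clearing denominators the identity reduces to a polynomial identity in $G$, $\zeta$ and their conjugates; this bookkeeping is the main obstacle, being the one step where the Codazzi-type content of the frame is used rather than pure algebra.

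\emph{Converse.} For given harmonic, nowhere antiholomorphic $g$, the first two lines of \eqref{repfor} form a Riccati system $\zeta_z=A(\zeta)$, $\zeta_{\bar z}=B(\zeta)$ in the unknown $\zeta$, whose coefficients depend on $g,g_z,\bar g_{\bar z}$ and only holomorphically on $\zeta$. Running the computation of the direct part backwards shows that its integrability condition $\partial_{\bar z}A+(\partial_\zeta A)B=\partial_zB+(\partial_\zeta B)A$ is equivalent to \eqref{eqg2}; equivalently, the projective linearisation of the system is a flat $\mathfrak{su}_{1,1}$-connection exactly when $g$ is harmonic, which is the standard zero-curvature formulation of harmonicity into $\h^2$. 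Since $\Sigma$ is simply connected I would integrate to obtain $\zeta$, checking that it takes values in $\D(2/\sqrt{-\kappa})$, i.e. $1-c^2|\zeta|^2>0$, so that $\zeta$ is a genuine point of $\h^2(\kappa)$; nowhere-antiholomorphicity then forces $g_z\neq0$ and, through \eqref{landa} reconstructed from \eqref{modulusG}, a positive definite induced metric. The third line of \eqref{repfor} determines $(x_3)_z$, whose compatibility and reality $(x_3)_{\bar z}=\overline{(x_3)_z}$ again follow from the structure equations, so simple connectivity yields a real $x_3$. Reversing the algebra of the direct part then shows that the resulting $X$ is conformal, has mean curvature $c$, and has $g$ as its Gauss map.
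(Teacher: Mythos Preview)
Your treatment of the direct implication is essentially the paper's proof: both collapse $V(G)$ at $H=c$ to reach \eqref{gzeta}, logarithmically differentiate to obtain the harmonic map equation, and read off \eqref{repfor} from \eqref{zetaz}, \eqref{barzetaz} and the coordinate expression for $(x_3)_z$. The repackaging via $\Phi$ and the order in which you present harmonicity versus the representation formula are cosmetic differences. One minor correction: equation \eqref{3line} holds for arbitrary $H$; the critical value $H=c$ enters the harmonicity computation through \emph{both} the simplified form of $g_z$ and the substitution in \eqref{3line}, not only the latter.

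In the converse there is a genuine gap. You write that after integrating the Frobenius system you will be ``checking that [$\zeta$] takes values in $\D(2/\sqrt{-\kappa})$'', but you give no mechanism for this check, and it is not automatic: the Frobenius theorem produces $\zeta:\Sigma\to\C$ from an arbitrary initial value in the disk, and nothing you have said prevents $|\zeta|$ from reaching $1/c$ somewhere on $\Sigma$. The paper handles this with a specific argument: setting $\phi=1-c^2|\zeta|^2$, the first two lines of \eqref{repfor} give $\phi_z=(\text{smooth})\cdot\phi$, so along any compact path $|(\log\phi)'|$ is bounded, and a Gronwall-type comparison shows $\phi$ cannot reach $0$. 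Without this step the constructed map need not land in $\E$ and the converse does not close. Your remark that the projective linearisation of the Riccati system is a flat $\mathfrak{su}_{1,1}$-connection is a correct reading of the Frobenius compatibility, but it addresses integrability, not the range constraint on $\zeta$.
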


\begin{proof}
Since $H=c$, $V(G)$ defined in \eqref{defV} simplifies and we obtain, using \eqref{modulusG} and the second equation in \eqref{gG},
\begin{equation} \label{gzeta}
g_z=-\frac{c+i\tau}4\frac{(1-c^2|\zeta|^2)(1-|g|^2)^2}{(\bar g-c\bar\zeta)(1-c\zeta\bar g)}\eta.
\end{equation}

 Differentiating \eqref{gzeta}, we get
\begin{eqnarray*}
\frac{g_{z\bar z}}{g_z} & = & \frac{\eta_{\bar z}}\eta
-c^2\frac{\bar\zeta\zeta_{\bar z}+\zeta\bar\zeta_{\bar z}}{1-c^2|\zeta|^2}
-2\frac{\bar gg_{\bar z}+g\bar g_{\bar z}}{1-|g|^2}
-\frac{\bar g_{\bar z}-c\bar\zeta_{\bar z}}{\bar g-c\bar\zeta}
+c\frac{\bar g\zeta_{\bar z}+\zeta\bar g_{\bar z}}{1-c\zeta\bar g} \\
& = & (c+i\tau)\frac{1-|G|^4}{4|G|^2}\bar\eta
+c^2(\zeta-\bar\zeta G^2)\frac{\bar\eta}{2G}
-2\frac{\bar gg_{\bar z}}{1-|g|^2}-2\frac{g\bar g_{\bar z}}{1-|g|^2} \\
& &
-c\frac{1-c^2|\zeta|^2}{\bar g-c\bar\zeta}\frac{\bar\eta}{2G}
+c\frac{(1-c^2|\zeta|^2)\bar g}{1-c\zeta\bar g}\frac{G\bar\eta}{2}
+\frac{-1-c^2|\zeta|^2+2c\zeta\bar g}{(\bar g-c\bar\zeta)(1-c\zeta\bar g)}\bar g_{\bar z}.
\end{eqnarray*}

We compute
\begin{eqnarray*}
B_1 & := & (c+i\tau)\frac{1-|G|^4}{4|G|^2}\bar\eta \\
& = & (c+i\tau)(1-c^2|\zeta|^2)(1-|g|^2)
\frac{(1-c\bar\zeta g)(1-c\zeta\bar g)+(g-c\zeta)(\bar g-c\bar\zeta)}
{4(g-c\zeta)(\bar g-c\bar\zeta)(1-c\bar\zeta g)(1-c\zeta\bar g)}\bar\eta, \\
B_2 & := & c^2(\zeta-\bar\zeta G^2)\frac{\bar\eta}{2G} \\
& = & c^2\frac{(\zeta-\bar\zeta g^2)(1-c^2|\zeta|^2)}{2(g-c\zeta)(1-c\bar\zeta g)}\bar\eta, \\
B_3 & := & -c\frac{1-c^2|\zeta|^2}{\bar g-c\bar\zeta}\frac{\bar\eta}{2G}
+c\frac{(1-c^2|\zeta|^2)\bar g}{1-c\zeta\bar g}\frac{G\bar\eta}{2} \\
& = & c(1-c^2|\zeta|^2)
\frac{c\bar g(\zeta-\bar\zeta g^2)(1-c^2|\zeta|^2)+c^2(\zeta^2\bar g^2-\bar\zeta^2g^2)
+2cg(\bar\zeta-\zeta\bar g^2)+|g|^4-1}
{2(g-c\zeta)(\bar g-c\bar\zeta)(1-c\bar\zeta g)(1-c\zeta\bar g)}\bar\eta,
\end{eqnarray*}
and so
$$B_1+B_2+B_3=-(c-i\tau)(1-c^2|\zeta|^2)(1-|g|^2)
\frac{(1-c\bar\zeta g)(1-c\zeta\bar g)+(g-c\zeta)(\bar g-c\bar\zeta)}
{4(g-c\zeta)(\bar g-c\bar\zeta)(1-c\bar\zeta g)(1-c\zeta\bar g)}\bar\eta.$$
Moreover, we have
$$-2\frac{g\bar g_{\bar z}}{1-|g|^2}
+\frac{-1-c^2|\zeta|^2+2c\zeta\bar g}{(\bar g-c\bar\zeta)(1-c\zeta\bar g)}\bar g_{\bar z}
=-\frac{(1-c\bar\zeta g)(1-c\zeta\bar g)+(g-c\zeta)(\bar g-c\bar\zeta)}
{(1-|g|^2)(\bar g-c\bar\zeta)(1-c\zeta\bar g)}\bar g_{\bar z},$$
and we conclude from \eqref{gzeta} that
$$\frac{g_{z\bar z}}{g_z}=-2\frac{\bar gg_{\bar z}}{1-|g|^2},$$
which means that $g:\Sigma\to\D$ is harmonic for the hyperbolic metric.

Combining \eqref{zetaz}, \eqref{barzetaz}, \eqref{gG} and \eqref{gzeta} gives the first order differential system satisfied by $\zeta$ in terms of $g$ and $g_z$:
\begin{equation} \label{syszeta}
\left\{\begin{array}{lll}
\zeta_z & = & \displaystyle{\frac2{c+i\tau}\frac{(1-c\zeta\bar g)^2}{(1-|g|^2)^2}g_z,} \\
\zeta_{\bar z} & = &
\displaystyle{-\frac2{c-i\tau}\frac{(g-c\zeta)^2}{(1-|g|^2)^2}\bar g_{\bar z}.}
       \end{array}\right.
\end{equation}
Observe that  \eqref{gzeta} gives $\eta$ in terms of $g$, $g_z$ and $\zeta$:
 \begin{equation}\label{star20}
\eta=-\frac4{c+i\tau}\frac{(\bar g-c\bar\zeta)(1-c\zeta\bar
g)}{(1-c^2|\zeta|^2)(1-|g|^2)^2}g_z.
 \end{equation}
Finally, since
$x_{3z}=A_3+\tau(x_1A_2-x_2A_1)=A_3+\tau\Lambda(x_1x_{2z}-x_2x_{1z})$,
we have
\begin{equation} \label{x3z}
 (x_3)_z=\frac\eta2+\frac{i\tau}2\frac{\zeta\bar\zeta_z-\bar\zeta\zeta_z}{1-c^2|\zeta|^2}.
\end{equation}

Moreover, if $g_z=0$ at some point, then these formulas show that $\zeta_z=0$, $\zeta_{\bar z}=0$ and $(x_3)_z=0$ at this point, contradicting the fact that $X$ is an immersion. Consequently, $g$ is nowhere antiholomorphic. We observe that the conformal factor of $X$ is given in terms of $g$ as
$$\lambda=\frac{4\left(|1-c\bar\zeta g|^2+|g-c\zeta|^2\right)^2}{(c^2+\tau^2)(1-c^2|\zeta|^2)^2(1-|g|^2)^4}.$$  

Conversely, assume that a map $g:\Sigma\to\D$ from a simply
connected Riemann surface $\Sigma$ verifies \eqref{eqg2} and is
nowhere antiholomorphic. A long but straightforward computation
shows that if $z$ denotes a complex parameter on $\Sigma$, then
$$\left(\displaystyle{\frac2{c+i\tau}\frac{(1-c\zeta\bar
g)^2}{(1-|g|^2)^2}g_z}\right)_{\bar{z}}\ =
\left(\displaystyle{-\frac2{c-i\tau}\frac{(g-c\zeta)^2}{(1-|g|^2)^2}\bar
g_{\bar z}}\right)_z.$$ By the Frobenius theorem, given $p_1+ip_2\in
\D(1/c)$ and $z_0\in \Sigma$, there is a unique solution $\zeta
:\Sigma\flecha \C$ to \eqref{syszeta} such that $\zeta (z_0)=p_1+ip_2$.
Let us check now that $|\zeta|<1/c$ on $\Sigma$.

Let $\phi:=1-c^2 |\zeta|^2$. Then, from \eqref{syszeta} we get

\begin{equation}\label{star10}
\phi_z = \frac{-2c^2 (\bar{\zeta} - \bar{g}^2
\zeta)}{(c+i\tau)(1-|g|^2)^2} g_z \phi.
\end{equation}
Assume that $|\zeta|<1/c$ does not hold in $\Sigma$ (observe that
$|\zeta(z_0)|<1/c$). Then there exists a regular path
$\gamma(t):[0,1]\flecha \Sigma$ such that $(\phi\circ \gamma)(0)>0$,
$(\phi\circ \gamma)(1)=0$ and $(\phi\circ \gamma)'(t)<0$ for all
$t\in [0,1)$. As $\gamma[0,1]$ is compact, we get from
\eqref{star10} that $$-\frac{(\phi\circ \gamma)'(t)}{(\phi\circ
\gamma)(t)} \leqslant C$$ for every $t\in [0,1)$ and some constant $C>0$.
Denote $f(t)=\log((\phi\circ \gamma)(t)) - \log((\phi\circ
\gamma)(0)).$ Then $f(0)=0$ and $f'(t)\geqslant -C$ for every
$t\in [0,1)$. Thus $f(t)\geqslant -C t$, which contradicts that
$\lim_{t\to1^-}f(t)=-\8$. So, $|\zeta|<1/c$ on $\Sigma$.

Once here, a computation shows that, if we write the third equation
in \eqref{repfor} as $(x_3)_z=:\cA$, then
$\cA_{\bar{z}}=\bar{\cA}_z$. Again by the Frobenius theorem there
exists $x_3:\Sigma\flecha \R$ such that $(x_3)_z=\cA$, unique once
we fix an initial condition $x_3 (z_0)=p_3$.

Therefore, we get a unique map $X=(x_1,x_2,x_3):\Sigma\flecha
\mathbb{E}$ with $X(z_0)=(p_1,p_2,p_3)$ that verifies system
\eqref{repfor}, where $\zeta =x_1+ix_2$.

To complete the proof of the converse, it remains to check that $X$
is a conformal immersion with critical mean curvature $c$ and Gauss
map $g$. This is a long but standard computation. The idea is,
starting with \eqref{repfor}, to show that if $X_z=A_1 V_1 + A_2 V_2
+ A_3 V_3$, then $A_1,A_2$ are given by \eqref{zetaz},
\eqref{barzetaz} and $A_3=:2 \eta$ is given by \eqref{star20}. From
here we can check that $X$ is a conformal immersion with Gauss map
$g$, and a computation using \eqref{defV} gives that $X$ has
critical constant mean curvature $c$. We omit the details.

\end{proof}

\begin{rem} \label{generalcase}
If $X:\Sigma\to\E$ is a CMC immersion with critical mean curvature possibly with vertical points, then its Gauss map $g:\Sigma\to\bar\C$ still satisfies \eqref{eqg2}, and it is nowhere antiholomorphic at points where $|g|\neq1$. Indeed, the whole proof can be extended to this case.

However, conversely, if $\Sigma$ is simply connected and $g:\Sigma\to\bar\C$ satisfies \eqref{eqg2} without assuming that $|g|<1$, then the representation formula \eqref{repfor} only locally defines a possibly branched immersion away from points where $|g|=1$ (see Remark 4.2 in \cite{danielimrn}).
\end{rem}

\begin{rem}\label{changeor}
When $\kappa<0$, if $X$ is a nowhere vertical CMC immersion with
critical mean curvature, it may not be possible to orient it so that
its unit normal is upwards pointing, since the orientation is chosen so that
the mean curvature is positive. However it is not a loss of
generality to assume that the unit normal is upwards pointing. Indeed, assume
that $X$ has a downwards pointing unit normal and let $r$ the rotation of
angle $\pi$ around the $x_1$-axis (see Remark \ref{rotation}). This
map $r$ is an isometry of $\E$, and hence $r\circ X$ is a nowhere
vertical CMC immersion with critical mean curvature $c$ and upwards pointing
normal. Then Theorem \ref{rep} applies to $r\circ X$. Also, if $g$
denotes the Gauss map of $X$, then the Gauss map of $r\circ X$ is
$1/g$.
\end{rem}

\begin{rem}
Theorem \ref{rep} implies that, when $\kappa<0$ (i.e., in
$\h^2\times\R$ and $\psl$), there generically exists a two-parameter
family of non-congruent CMC immersions with critical mean curvature
sharing the same Gauss map. Indeed, in Theorem \ref{rep}, for a
given $z_0\in\Sigma$ one can prescribe $\zeta(z_0)$ and $x_3(z_0)$;
changing $x_3(z_0)$ simply corresponds to a vertical translation,
but changing $\zeta(z_0)$ gives a new non-congruent immersion unless
the Gauss map has some symmetry, by property (a) in Theorem
\ref{thm:gauss}. In $\h^2\times\R$ this was noticed in Remark 5 in
\cite{fmajm}. When $\kappa=0$ (i.e., in $\nil$) this is not the case
anymore (see Section 6 in \cite{danielimrn}).
\end{rem}

\subsection{The Hopf differential} \label{sec:hopf}

One can associate two natural holomorphic quadratic differentials to the immersion $X$:
\begin{itemize}
 \item the Hopf differential $Q(g)\rmd z^2$ of $g$ (in the sense of harmonic maps into $\D$ endowed with the hyperbolic metric):
$$Q(g):=\frac{4g_z\bar g_z}{(1-|g|^2)^2},$$
\item the Abresch-Rosenberg differential $\Phi\rmd z^2$ of $X$ \cite{ar1,ar2} (see also \cite{fmdga} for its expression):
$$\Phi:=2(H+i\tau)P-(\kappa-4\tau^2)\frac{\eta^2}4=2(c+i\tau)P+(c^2+\tau^2)\eta^2.$$
\end{itemize}

\begin{prop} \label{hopfar}
We have $$Q(g)=-\Phi.$$
\end{prop}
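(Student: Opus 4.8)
The plan is to express both sides of the claimed identity in terms of the surface data $\eta$, $P$, $G$ and $\zeta$ introduced in Section \ref{sec:conformal}, and to reduce $Q(g)=-\Phi$ to a purely algebraic identity in $g$ and $\zeta$. Since we are in the critical case $H=c$, formula \eqref{gzeta} already writes $g_z$ as an explicit multiple of $\eta$, so that
$$Q(g)=\frac{4g_z\bar g_z}{(1-|g|^2)^2}=-(c+i\tau)\frac{(1-c^2|\zeta|^2)\eta}{(\bar g-c\bar\zeta)(1-c\zeta\bar g)}\,\bar g_z.$$
Hence the whole problem reduces to understanding $\bar g_z$. First I would use \eqref{diffbarG}, which expresses $\bar G_z/\bar G$ linearly in $\bar g_z$; solving for $\bar g_z$ and substituting it back, the prefactor multiplying $\bar G_z/\bar G$ collapses and one is left with
$$Q(g)=-(c+i\tau)\eta\,\frac{\bar G_z}{\bar G}+T,$$
where $T$ is the explicit contribution of the terms $c\eta\bar g-\tfrac{c^2\eta}{2}(\bar\zeta+\bar g^2\zeta)$ coming from \eqref{diffbarG}.

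The crucial step, and the place where the Hopf differential $P$ finally enters, is the computation of $\bar G_z/\bar G$. The key observation is that the evolution equation \eqref{N3z} for $N_{3z}$ is exactly the equation carrying a $P$-term. On the other hand, writing $N_3=(1-|G|^2)/(1+|G|^2)$ from \eqref{tres55} gives directly $N_{3z}=-2(\bar GG_z+G\bar G_z)/(1+|G|^2)^2$, while \eqref{formulaeta} (with $H=c$) gives $\bar GG_z=\tfrac{1}{4}\eta\,U(G,\zeta)$ in closed form. Equating the two expressions for $N_{3z}$, using $\lambda$ from \eqref{landa} and $A_1,A_2,A_3$ from \eqref{asubis} (so that the term $\tau(A_2N_1-A_1N_2)$ collapses to $i\tau\eta/2$), and solving for $\bar G_z$, I expect to obtain the clean formula
$$\frac{\bar G_z}{\bar G}=\frac{2P}{\eta}-i\tau\eta-\frac{c^2\eta}{2\bar G}(\bar\zeta+\bar G^2\zeta).$$
This is the analytic heart of the argument: it isolates the $P$-dependence of $\bar g_z$, and everything else is bookkeeping.

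It then remains to substitute this into the expression for $Q(g)$ and to check that the result equals $-\Phi=-2(c+i\tau)P-(c^2+\tau^2)\eta^2$. The term $2P/\eta$ produces precisely $-2(c+i\tau)P$, matching the $P$-part of $\Phi$; after cancelling it, the remaining identity is purely algebraic in $g$ and $\zeta$. Using \eqref{zetaG} to rewrite $\bar\zeta+\bar G^2\zeta$ together with $\bar G=(\bar g-c\bar\zeta)/(1-c\zeta\bar g)$, the verification reduces to the single identity
$$\frac{c(\bar\zeta+\bar G^2\zeta)}{2\bar G}-\frac{1-c^2|\zeta|^2}{(\bar g-c\bar\zeta)(1-c\zeta\bar g)}\Bigl(\bar g-\frac{c}{2}(\bar\zeta+\bar g^2\zeta)\Bigr)=-1.$$
Clearing the common denominator $(\bar g-c\bar\zeta)(1-c\zeta\bar g)$, I expect the numerator to factor as $-(\bar g-c\bar\zeta)(1-c\zeta\bar g)$, which yields exactly $-1$ and closes the proof. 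The main obstacle is organisational rather than conceptual: one must correctly track the substitution in the $\bar G_z$ computation, where the $P$-term is hidden inside \eqref{N3z}, and then keep the final algebraic cancellation under control; no idea beyond the formulas already assembled in Section \ref{sec:conformal} is required.
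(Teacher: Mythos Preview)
Your proposal is correct and follows essentially the same approach as the paper. Both arguments hinge on the same key step: using \eqref{N3z} together with \eqref{tres55}, \eqref{asubis}, \eqref{landa} and \eqref{formulaeta} to relate $P$ to $\bar G_z/\bar G$ (your displayed formula $\bar G_z/\bar G=2P/\eta-i\tau\eta-\tfrac{c^2\eta}{2\bar G}(\bar\zeta+\bar G^2\zeta)$ is exactly the paper's intermediate expression for $P$ solved for $\bar G_z/\bar G$), then passing between $\bar G_z$ and $\bar g_z$ via \eqref{diffbarG} and \eqref{zetaG}; the only difference is that the paper computes $P$ (and then $\Phi$) in terms of $\bar g_z$ and compares with the expression \eqref{gzeta} for $Q(g)$, whereas you run the same substitutions in the opposite direction, expressing $Q(g)$ in terms of $P$ and checking the residual algebraic identity --- which does indeed collapse to $-1$ as you claim.
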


\begin{proof}
By \eqref{gzeta} we have
\begin{equation*}
Q(g)=-(c+i\tau)\frac{(1-c^2|\zeta|^2)}{(\bar g-c\bar\zeta)(1-c\zeta\bar g)}\eta\bar g_z.
\end{equation*}

We now compute $\Phi$. First we compute $P$, starting from the identity \eqref{N3z} and using \eqref{tres55}, \eqref{asubis}, \eqref{landa} and \eqref{formulaeta}:
\begin{eqnarray*}
P & = & \frac\lambda{\bar\eta}\left(2\frac{\bar GG_z+G\bar G_z}{(1+|G|^2)^2}-(H-i\tau)\frac\eta2\right) \\
& = & \frac\eta{2|G|^2}\left(\frac{U(G,\zeta)\eta}4+G\bar G_z\right)
-(H-i\tau)(1+|G|^2)^2\frac{\eta^2}{8|G|^2} \\
& = & \frac{i\tau}2\eta^2+\frac{c^2\eta^2}{4\bar G}(\bar\zeta+\bar G^2\zeta)+\frac{\bar G_z\eta}{2\bar G}.
\end{eqnarray*}
Then, using \eqref{diffbarG} and \eqref{zetaG}, we get
\begin{eqnarray*}
P & = & \frac{i\tau}2\eta^2
+\frac{c^2\eta^2}2\frac{\bar\zeta+\bar g^2\zeta}{(\bar g-c\bar\zeta)(1-c\zeta\bar g)}
-\frac{c\bar g\eta^2}2\frac{1+c^2|\zeta|^2}{(\bar g-c\bar\zeta)(1-c\zeta\bar g)}
+\frac\eta2\frac{1-c^2|\zeta|^2}{(\bar g-c\bar\zeta)(1-c\zeta\bar g)}\bar g_z \\
& = & -\frac{c-i\tau}2\eta^2+\frac\eta2\frac{1-c^2|\zeta|^2}{(\bar g-c\bar\zeta)(1-c\zeta\bar g)}\bar g_z.
\end{eqnarray*}
Finally we get $\Phi=-Q(g)$.
\end{proof}

\subsection{The Gauss map of the sister minimal surface in $\nil(\hat\tau)$} \label{sec:sister}

Let $X:\Sigma\flecha \E^3(\kappa,\tau)$ be a CMC immersion 
 with critical mean
curvature. Assume that $\Sigma$ is simply connected. Then, by the Lawson-type correspondence in
\cite{danielcmh}, there exists a minimal immersion
$\hat{X}:\Sigma\flecha \nil(\hat\tau)=\E^3(0,\hat\tau)$ where
$\hat{\tau}^2 = \tau^2 +c^2$ that is isometric to $X$. Moreover, the
immersions $X$ and $\hat{X}$ have the same angle function (see Section
\ref{sec:ekappatau}) and their associated Abresch-Rosenberg
differentials $\Phi$ and $\hat{\Phi}$ are related by $\Phi =
e^{-2i\theta}\hat{\Phi}$, where $\tau + i c = e^{i\theta}
\hat{\tau}$ (see for instance the end of Section 3 in \cite{gmm} and the expression of the Abresch-Rosenberg differential given in Section \ref{sec:hopf}).

A pair of such isometric surfaces are called \emph{sister surfaces}.
The minimal sister surface in $\nil(\hat\tau)$ of a CMC surface in
$\E^3(\kappa,\tau)$ with critical mean curvature is unique up to
direct isometries of $\nil(\hat\tau)$.

If $g:\Sigma\to\D$ is a harmonic map for the hyperbolic metric on $\D$, we consider the form
$\mu(g)|\rmd z|^2$ (which is independent of the choice of the
conformal parameter $z$) where
$$\mu(g):=\frac{4(|g_z|^2+|g_{\bar z}|^2)}{(1-|g|^2)^2}.$$ Two harmonic maps $g_1$ and $g_2$ are said to be \emph{associate} if there exists a real constant $\theta$ such that $Q(g_2)\rmd z^2=e^{-2i\theta}Q(g_1)\rmd z^2$ and $\mu(g_2)|\rmd z|^2=\mu(g_1)|\rmd z|^2$.


We now consider a  conformal immersion $X:\Sigma\to\E^3(\kappa,\tau)$ from a simply connected Riemann surface $\Sigma$ into $\E^3(\kappa,\tau)$ such that $X(\Sigma)$ is a local graph with positive angle function. We let  $\hat X:\Sigma\to\nil(\hat\tau)$ be its sister minimal immersion.  Let $g$ be the Gauss map of $X$ and $\hat g$ the Gauss map of $\hat X$ (which is unique only up to rotations around $0\in\D$, see Section 6 in \cite{danielimrn}).


\begin{prop}
The Gauss maps  $g$ and $\hat g$ are associate and $Q(g)=e^{-2i\theta}Q(\hat g)$  with $\theta$ defined by 
\begin{equation} \label{deftheta}
\tau+ic=e^{i\theta}\hat\tau.
\end{equation}
\end{prop}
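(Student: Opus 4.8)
The plan is to treat the two assertions in turn: the Hopf-differential identity follows directly from Proposition \ref{hopfar} applied to both immersions together with the sister relation for the Abresch--Rosenberg differentials, while the equality of the energy forms $\mu$ reduces to a single closed-form computation of the holomorphic energy density. First I would establish $Q(g)=e^{-2i\theta}Q(\hat g)$. By Proposition \ref{hopfar} applied to $X$ we have $Q(g)=-\Phi$. The sister immersion $\hat X$ is minimal in $\nil(\hat\tau)=\E^3(0,\hat\tau)$, and since the critical mean curvature of $\nil(\hat\tau)$ equals $0$, this minimal immersion has critical mean curvature; hence Proposition \ref{hopfar} applies to $\hat X$ as well (with $\kappa=0$, $c=0$, $\tau=\hat\tau$) and gives $Q(\hat g)=-\hat\Phi$. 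Combining these with the relation $\Phi=e^{-2i\theta}\hat\Phi$ recalled in Section \ref{sec:sister}, with $\theta$ as in \eqref{deftheta}, yields $Q(g)=-\Phi=-e^{-2i\theta}\hat\Phi=e^{-2i\theta}Q(\hat g)$, which is the second assertion; in particular $|Q(g)|=|Q(\hat g)|$.

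To prove that $g$ and $\hat g$ are associate it then remains to show $\mu(g)=\mu(\hat g)$. I would set $\partial(g):=4|g_z|^2/(1-|g|^2)^2$ and $\bar\partial(g):=4|g_{\bar z}|^2/(1-|g|^2)^2$, so that $\mu(g)=\partial(g)+\bar\partial(g)$; since $\bar g_z=\overline{g_{\bar z}}$ one has $\partial(g)\bar\partial(g)=|Q(g)|^2$. The crux is to compute $\partial(g)$ in closed form. Starting from \eqref{gzeta} and using \eqref{modulusG} together with the relation $G=(g-c\zeta)/(1-c\bar\zeta g)$ from \eqref{gG} to express everything through $G$ and $\eta$, the factors $(1-c^2|\zeta|^2)$, $(1-|g|^2)$ and $|1-c\zeta\bar g|$ cancel and one obtains $\partial(g)=(c^2+\tau^2)|\eta|^2(1-|G|^2)^2/(4|G|^2)$. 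Feeding in \eqref{landa} and the identity $N_3=(1-|G|^2)/(1+|G|^2)$ read off from \eqref{tres55} turns this into the geometric expression $\partial(g)=(c^2+\tau^2)\lambda N_3^2$, where $\lambda$ is the conformal factor and $N_3$ the angle function. This single reduction is the main (and essentially only substantial) obstacle; everything around it is formal.

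Finally I would transport this formula through the sister correspondence. In $\nil(\hat\tau)$ the same computation with $c=0$ gives $\partial(\hat g)=\hat\tau^2\hat\lambda\hat N_3^2$. By the correspondence recalled in Section \ref{sec:sister}, $X$ and $\hat X$ are isometric and share the same angle function, so in a common conformal parameter $\hat\lambda=\lambda$ and $\hat N_3=N_3$; together with $\hat\tau^2=\tau^2+c^2$ this gives $\partial(\hat g)=(c^2+\tau^2)\lambda N_3^2=\partial(g)$. Since $X$ is a local graph its Gauss map is nowhere antiholomorphic, whence $\partial(g)>0$; combining $\partial(g)\bar\partial(g)=|Q(g)|^2$ with $|Q(g)|=|Q(\hat g)|$ and $\partial(g)=\partial(\hat g)$ gives $\bar\partial(g)=\bar\partial(\hat g)$, and therefore $\mu(g)=\partial(g)+\bar\partial(g)=\partial(\hat g)+\bar\partial(\hat g)=\mu(\hat g)$. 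With the Hopf-differential relation already established, this shows that $g$ and $\hat g$ are associate.
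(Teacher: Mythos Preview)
Your proof is correct. The argument for the Hopf-differential identity $Q(g)=e^{-2i\theta}Q(\hat g)$ is exactly the paper's, and your verification that $\partial(g)=(c^2+\tau^2)\,|\eta|^2(1-|G|^2)^2/(4|G|^2)=(c^2+\tau^2)\,\lambda N_3^{\,2}$ from \eqref{gzeta}, \eqref{modulusG}, \eqref{landa} and \eqref{tres55} checks out line by line, as does the passage from $\partial(g)=\partial(\hat g)$ and $|Q(g)|=|Q(\hat g)|$ to $\mu(g)=\mu(\hat g)$ using $\partial(g)\bar\partial(g)=|Q(g)|^2$ and $\partial(g)>0$.

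Your route to $\mu(g)=\mu(\hat g)$ is, however, genuinely different from the paper's. The paper does not rewrite $\partial(g)$ in the intrinsic form $(c^2+\tau^2)\lambda N_3^{\,2}$; instead it invokes the additional sister relation $\eta=e^{-i\theta}\hat\eta$ (equation \eqref{etahateta}, taken from \cite{gmm}) together with the $\nil$ formula \eqref{hateta}, combines these with \eqref{gzeta} to derive the pointwise identity \eqref{ggammazeta}, and then takes moduli. Your approach is more self-contained: it uses only the two basic sister invariants (the induced metric and the angle function) plus $\hat\tau^2=c^2+\tau^2$, and it makes transparent \emph{why} $\partial(g)$ is preserved. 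The paper's approach, on the other hand, produces as a byproduct the explicit relation \eqref{ggammazeta} between $g$, $\hat g$ and $\zeta$, which is not merely a step in the proof but is exploited later (see the remark after the proposition and Example \ref{ex1}) to compute $\zeta$ in closed form without integrating the representation system \eqref{repfor}. So your argument is cleaner for the proposition itself, while the paper's detour yields an extra formula of independent use.
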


\begin{proof}
We keep for the immersion $X$ the same notation as in the previous sections.

By Proposition \ref{hopfar} we have $Q(g)=-\Phi$, and similarly we have $Q(\hat g)=-\hat\Phi$ where $\hat\Phi$ is the Abresch-Rosenberg differential of $\hat X$. Since the differentials $\Phi$ and $\hat\Phi$ satisfy $\Phi=e^{-2i\theta}\hat\Phi$, we get $Q(g)=e^{-2i\theta}Q(\hat g)$.

We now prove that $\mu(g)=\mu(\hat g)$.


Since sister surfaces have the same angle function, it follows from \eqref{tres55} and the analogous formula in $\nil(\hat\tau)$ (see Definition 3.1 in \cite{danielimrn}) that
\begin{equation} \label{Ghatg}
 |G|=|\hat g|.
\end{equation}

Denote by $(\hat g,\hat\eta)$ the Weierstrass data of $\hat X$ (see Defintion 3.2 in \cite{danielimrn}). By equation (10) in \cite{danielimrn} (or by \eqref{formulaeta} with $(\eta,H,\tau,c)$ replaced by $(\hat\eta,0,\hat\tau,0)$)
we have
\begin{equation} \label{hateta}
 \hat\eta=\frac{4i}{\hat\tau}\frac{\overline{\hat g}\hat g_z}{(1-|\hat g|^2)^2}.
\end{equation}
On the other hand we have (see for instance the end of Section 3 in \cite{gmm})
\begin{equation} \label{etahateta}
 \eta=e^{-i\theta}\hat\eta.
\end{equation}
So using \eqref{gzeta} we get
$$g_z=\frac{\tau-ic}{\tau+ic}\frac{(1-c^2|\zeta|^2)(1-|g|^2)^2}{(\bar g-c\bar\zeta)(1-c\zeta\bar g)}
\frac{\overline{\hat g}\hat g_z}{(1-|\hat g|^2)^2}$$
and so, using \eqref{deftheta}, \eqref{Ghatg} and \eqref{modulusG},
\begin{equation} \label{ggammazeta}
\frac{g_z}{1-|g|^2}=
e^{-2i\theta}\frac{\hat g_z}{1-|\hat g|^2}\frac{\overline{\hat g}(1-c\bar\zeta g)}{\bar g-c\bar\zeta},
\end{equation}
which implies,  using \eqref{Ghatg} and \eqref{gG},
\begin{equation} \label{modulus1}
\frac{|g_z|}{1-|g|^2}=\frac{|\hat g_z|}{1-|\hat g|^2}.
\end{equation}
Then \eqref{modulus1} and the fact that $|Q(g)|=|Q(\hat g)|$ imply that
\begin{equation} \label{modulus2}
\frac{|g_{\bar z}|}{1-|g|^2}=\frac{|\hat g_{\bar z}|}{1-|\hat g|^2}.
\end{equation}
Finally, we deduce from \eqref{modulus1} and \eqref{modulus2} that $\mu(g)=\mu(\hat g)$. This proves that $g$ and $\hat g$ are associate.
\end{proof}

\begin{rem}
 Formula \eqref{ggammazeta} gives the expression of $\zeta$ in terms of $g$ and $\hat g$ without the need to integrate a differential system, contrarily to the representation formula \eqref{repfor}. We will use \eqref{ggammazeta} to explicitely compute Example \ref{ex1}.
\end{rem}

We recall the following result from \cite{korea} (see Corollaries
4.6.3 and 4.6.4 there; see also Theorem 6.10 in \cite{ICMProceedings}).

\begin{prop} \label{entire}
The following conditions are equivalent for a surface $X(\Sigma)$ of
critical CMC in $\E^3(\kappa,\tau)$:
 \begin{enumerate}
   \item[$(1)$]
 $X(\Sigma)$ is an entire graph,
\item[$(2)$]
$X(\Sigma)$ is a complete local graph.
 \end{enumerate}
In particular, the sister correspondence preserves entire graphs of
critical CMC (see Section \ref{sec:ekappatau} for the definition).
\end{prop}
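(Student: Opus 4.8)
The plan is to establish the equivalence $(1)\Leftrightarrow(2)$ by proving both implications separately, and then to deduce the invariance under the sister correspondence. The implication $(1)\Rightarrow(2)$ should be the easy direction: if $X(\Sigma)$ is an entire graph, then the restriction of $\pi$ to $X(\Sigma)$ is a global diffeomorphism onto $\M^2(\kappa)=\h^2(\kappa)$, which is a complete Riemannian manifold; since the fibers of $\pi$ are orthogonal lines of infinite length and the horizontal distance dominates the base distance, any divergent path on $\Sigma$ must project to a divergent path in $\h^2(\kappa)$, forcing the induced metric on $\Sigma$ to be complete. Thus $X(\Sigma)$ is a complete local graph. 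This argument is essentially the observation that a global graph over a complete base with uniformly controlled fiber direction is complete.

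The substantive direction is $(2)\Rightarrow(1)$. First I would recall that a complete local graph is, in particular, a complete surface on which $\pi$ restricts to a local diffeomorphism, so $\pi|_{X(\Sigma)}:X(\Sigma)\to\h^2(\kappa)$ is a local diffeomorphism between a complete surface and a complete simply connected target. The natural strategy is to show this local diffeomorphism is a covering map, and then, since $\h^2(\kappa)$ is simply connected, conclude it is a global diffeomorphism, i.e. that $X(\Sigma)$ is an entire graph. To upgrade ``local diffeomorphism'' to ``covering map'' I would invoke the standard criterion that a local diffeomorphism which has the path-lifting property — or equivalently which does not lose distance, so that $\pi|_{X(\Sigma)}$ is distance-nonincreasing up to a bounded factor — is a covering. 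The key geometric input is that the projection $\pi$ restricted to a local graph is nonexpanding on horizontal directions, and on $X(\Sigma)$ the angle function is bounded away from degeneracy in a way that, combined with completeness, prevents the base path from reaching $\partial_\infty\h^2(\kappa)$ in finite intrinsic length; this is exactly the setup in which one applies a theorem of the type ``a complete surface mapping to a complete simply connected space by a local isometry/local diffeomorphism with controlled metric distortion is a global diffeomorphism.''

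The hard part, and the reason the proposition is attributed to \cite{korea}, will be controlling the metric distortion of $\pi|_{X(\Sigma)}$ precisely enough to run the covering-space argument. The angle function $N_3=\langle N,\xi\rangle$ of a critical CMC local graph need not be bounded away from zero a priori, so the projection is a local diffeomorphism but possibly with vanishing derivative in the limit; one must use the special structure of critical CMC surfaces — for instance the harmonicity of the Gauss map established in Theorem \ref{rep} and the resulting analytic control, or a monotonicity/gradient estimate for the angle function — to guarantee that completeness of the intrinsic metric forces completeness of the base-distance pullback. Rather than reproving these estimates, I would cite the relevant completeness and gradient bounds from \cite{korea} (Corollaries 4.6.3 and 4.6.4) and assemble them into the covering argument.

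Finally, the ``in particular'' clause follows immediately from the equivalence together with the properties of the sister correspondence recorded above: the sister immersion $\hat X:\Sigma\to\nil(\hat\tau)$ is isometric to $X$ by \cite{danielcmh}, so $X(\Sigma)$ is complete if and only if $\hat X(\Sigma)$ is complete, and both are local graphs since sister surfaces share the same angle function by \eqref{Ghatg} (a surface is a local graph exactly when its angle function is nowhere zero). Applying the equivalence $(1)\Leftrightarrow(2)$ on each side then shows that $X(\Sigma)$ is an entire graph if and only if $\hat X(\Sigma)$ is, which is the asserted invariance.
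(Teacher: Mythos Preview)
Your outline diverges from the paper's argument in a substantive way. You treat the sister correspondence purely as a corollary of the equivalence $(1)\Leftrightarrow(2)$; the paper does the opposite, using the correspondence as the engine of the proof of $(2)\Rightarrow(1)$. Concretely: starting from a complete local graph $X(\Sigma)$ in $\E^3(\kappa,\tau)$, the paper passes to the sister minimal immersion $\hat X(\Sigma)$ in $\nil(\hat\tau)$, which has the same induced metric $\rmd s^2$ and the same angle function $\nu$, hence is again a complete local graph. A prior result (Theorem 3.1 in \cite{dh}) says any complete minimal local graph in $\nil$ is entire; then a Cheng--Yau type statement (Corollary 13 in \cite{fmtams}) gives that the metric $\nu^2\rmd s^2$ is complete; finally Lemma 9 in \cite{fmtams} says completeness of $\nu^2\rmd s^2$ forces $X(\Sigma)$ itself to be an entire graph in $\E^3(\kappa,\tau)$. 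So the detour through $\nil$ is precisely what replaces the ``metric distortion control'' you identify as the hard part.

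Your direct covering-space strategy is the right intuition --- indeed Lemma 9 in \cite{fmtams} is essentially a covering argument, and completeness of $\nu^2\rmd s^2$ is exactly the hypothesis that makes path-lifting go through, since $\nu^2\rmd s^2$ is comparable to the pullback of the base metric. But as you acknowledge, you do not prove this completeness; you gesture at harmonicity of $g$ or at unspecified gradient estimates and defer to \cite{korea}. That leaves a genuine gap: there is no a priori lower bound on $\nu$ for a critical CMC local graph, and without the $\nil$ reduction (or an equivalent substitute) you have no mechanism to obtain completeness of $\nu^2\rmd s^2$. The paper's route shows that this step is not bypassed but rather relocated to the already-solved $\nil$ case plus Cheng--Yau.
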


\begin{proof}
That $(1)$ implies $(2)$ is trivial. Conversely, let $X(\Sigma)$ be
a complete local graph in $\E^3(\kappa,\tau)$, which we will assume
to be simply connected by passing to its universal covering if
necessary. Let $\rmd s^2$ denote the metric induced by $X$ on $\Sigma$
and $\nu$ the angle function of $X(\Sigma)$. Let $\hat{X}(\Sigma)$
denote the sister surface of $X$ in ${\rm Nil}_3$. Then, $\rmd s^2$ and
$\nu$ coincide with the metric induced by $\hat{X}$ on $\Sigma$, and
with the angle function of $\hat{X}(\Sigma)$, respectively. Then, in
particular, $\hat{X}(\Sigma)$ is a complete local graph in ${\rm
Nil}_3$. Consequently, by Theorem 3.1 in [7], $\hat{X}(\Sigma)$ is
an entire graph in ${\rm Nil}_3$. By Corollary 13 in \cite{fmtams}
(which uses a result from \cite{chengyau}), the metric $\nu^2\rmd s^2$ is
complete, and so by Lemma 9 in \cite{fmtams} the surface $X(\Sigma)$
is an entire graph in $\E^3(\kappa,\tau)$. This shows that $(2)$
implies $(1)$ and completes the proof.
\end{proof}

%
%
%
%
%

\begin{exa}[Surface with conformal Gauss map] \rm \label{ex1}
We seek a local graph with critical CMC whose Gauss map $g:\Sigma\to\D$ is conformal, i.e., such that the Hopf differential of $g$ vanishes identically. Then $\hat g$ is also conformal, so the surface will be the sister surface of that of Example 8.1 in \cite{danielimrn}. Up to a reparametrization we may assume that $\Sigma=\D$ and that $\hat g(z)=z$. Then up to an isometry of $\E^3(\kappa,\tau)$ we can assume that $g(z)=z$.

Then by \eqref{ggammazeta} we get
$$\zeta=\frac{e^{2i\theta}-1}c\frac z{e^{2i\theta}|z|^2-1}.$$ Using \eqref{hateta}, \eqref{etahateta} and \eqref{deftheta} we get
$$\eta=\frac{4i}{\tau+ic}\frac{\bar z}{(1-|z|^2)^2}.$$ Next, we compute that
$$\zeta_z=\frac{1-e^{2i\theta}}{c(e^{2i\theta}|z|^2-1)^2},\quad
\bar\zeta_z=\frac{(1-e^{-2i\theta})e^{-2i\theta}}c\frac{\bar z^2}{(e^{-2i\theta}|z|^2-1)^2},$$
$$1-c^2|\zeta|^2=\frac{(1-|z|^2)^2}{(e^{2i\theta}|z|^2-1)(e^{-2i\theta}|z|^2-1)}.$$
Then we compute using \eqref{x3z} that
$$(x_3)_z=\left(\frac{2i}{\tau+ic}
-\frac{i\tau(1-e^{2i\theta})(1-e^{-2i\theta})}{2c^2}
\frac{|z|^4-2|z|^2e^{-2i\theta}+1}{(e^{2i\theta}|z|^2-1)(e^{-2i\theta}|z|^2-1)}\right)\frac{\bar z}{(1-|z|^2)^2}.$$
By \eqref{deftheta} we have $(1-e^{2i\theta})(1-e^{-2i\theta})=4c^2/(\tau^2+c^2)$, so we get
\begin{eqnarray*}
 (x_3)_z & = & \frac2{\tau^2+c^2}
\frac{c(|z|^4+1)+2(\tau\sin(2\theta)-c\cos(2\theta))|z|^2}{(1-|z|^2)^2(e^{2i\theta}|z|^2-1)(e^{-2i\theta}|z|^2-1)}\bar z \\
& = & \frac{2c}{\tau^2+c^2}\frac{(1+|z|^2)^2\bar z}{(1-|z|^2)^2(|z|^4-2|z|^2\cos(2\theta)+1)},
\end{eqnarray*}
and finally, up to a vertical translation,
$$x_3=-\frac\tau{c^2}\arctan\left(\frac{|z|^2-\cos(2\theta)}{\sin(2\theta)}\right)+\frac2{c(1-|z|^2)}.$$
Hence we obtain a surface of revolution about the $x_3$-axis; this surface is also an entire graph and $x_3\to+\infty$ as $|\zeta|\to1/c$. This surface is described in \cite{torralbo} (item 2 of Theorem 2 with $4H^2+\kappa=0$ and $E=0$). In $\h^2(\kappa)\times\R$ (i.e., when $\tau=0$), this is the surface described on page 1172 in \cite{fmajm} (see also \cite{ar1}). 
\end{exa}


\begin{exa}[Surfaces with singular Gauss map] \rm \label{ex:singular}
We seek  complete local graphs with critical CMC whose Gauss map $g:\Sigma\to\D$ is singular on an open set of $\Sigma$. Then  $g(\Sigma)$ and $\hat g(\Sigma)$ are geodesics of $\D$ for the hyperbolic metric, and these surfaces will be sister surfaces of those of Example 8.2 in \cite{danielimrn}. These minimal surfaces in $\nil$ constitute a one-parameter family of surfaces that are not isometric one to another, the parameter being the hyperbolic distance between $\hat g(\Sigma)$ and the origin $0\in\D$. Since sister surfaces are isometric, we get a one-parameter family of CMC surfaces in $\E^3(\kappa,\tau)$ that are not isometric one to another. Since their sister minimal surfaces in $\nil$ are invariant by a one-parameter family of translations, a standard argument using the integrability equations for surfaces in $\E^3(\kappa,\tau)$ \cite{danielcmh} shows that these surfaces are invariant by a one-parameter family of isometries of $\E^3(\kappa,\tau)$ (see Proposition 4.3 in \cite{gmm}). Moreover, since their sister minimal surfaces in $\nil$ are entire graphs, it follows from Proposition \ref{entire} that these surfaces are also entire graphs. In $\h^2(\kappa)\times\R$ (i.e., when $\tau=0$), these are the surfaces described in Proposition 18 in \cite{fmajm} (see also \cite{saearp}). 
\end{exa}

\section{The Gauss map in the Lorentzian model} \label{sec:lorentzian}

In this section we assume that $\kappa<0$ and we will show an alternative expression for the Gauss map of nowhere vertical surfaces  when we consider the Lorentzian model for the hyperbolic space, that is, we now consider $\pi:\E\to\h^2(\kappa)$, where 
$$\h^2(\kappa )=\{p=(p_0,p_1,p_2)\in\LL^3\mid \langle p, p\rangle =1/\kappa , p_0>0 \}\subset \LL^3.$$ 
Here $p_0$ denotes the timelike coordinate in Lorentz space $\LL^3$ and $\langle p, p\rangle=-p_0^2+p_1^2+p_2^2$.

This construction is inspired by the one given in \cite{fmajm} for surfaces in $\h^2\times\R$. In particular, it agrees with that one when $\tau=0$.

\begin{thm}
Let $X:\Sigma\to\E$ be a nowhere vertical surface oriented so that  its unit normal vector field $N$ points upwards. Define  $X_*=\pi\circ X:\Sigma\to\h^2(\kappa )\subset\LL^3$, $N_*=\rmd\pi(N):\Sigma\to\LL^3$ and $\nu=\langle N,\xi\rangle$. 

Then the map $\tilde{g}:\Sigma\to\h^2=\h^2(-1)$ given by 
$$\tilde{g}=\frac{1}{\nu}\left( 2c X_* + N_*\right).$$ 
agrees with the Gauss map of $X$ as defined in Corollary \ref{ginframe}.
\end{thm}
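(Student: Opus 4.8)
The plan is to translate the intrinsic Lorentzian expression for $\tilde g$ into the disk model for $\h^2(-1)\simeq\cS^+$ and check it agrees with the formula \eqref{eqn:ginframe} of Corollary \ref{ginframe}. The key point is that both constructions use the same fibration $\pi$ and the same normal $N$, only expressed in different models of $\h^2(\kappa)$, so the proof is essentially a coordinate computation linking the Lorentzian model to the Poincaré disk model $\D(2/\sqrt{-\kappa})$ used throughout Section \ref{sec:ekappatau}.

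First I would write down $X_*=\pi\circ X$ and $N_*=\rmd\pi(N)$ explicitly. In the frame \eqref{defframeV} we have $N=N_1V_1+N_2V_2+N_3V_3$ with $N_3=\nu$, and since $V_1,V_2$ project to $\frac1\Lambda\partial_{x_1},\frac1\Lambda\partial_{x_2}$ while $V_3=\xi$ is vertical, the horizontal part of $N$ pushes down to $N_*=\rmd\pi(N)=\frac{N_1}\Lambda\partial_{x_1}+\frac{N_2}\Lambda\partial_{x_2}$, viewed as a tangent vector to $\h^2(\kappa)$ at the point $X_*=\pi(x)=(x_1,x_2)=\zeta$. The next step is to pass between the two models of $\h^2(\kappa)$: I would use the standard isometry from the hyperboloid $\{\langle p,p\rangle=1/\kappa,\ p_0>0\}\subset\LL^3$ to the disk $\D(2/\sqrt{-\kappa})$ given by stereographic-type projection from $(-1/\sqrt{-\kappa},0,0)$, compute its differential to transport $N_*$, and thereby express the Lorentzian quantity $\frac1\nu(2cX_*+N_*)$ as a point of the disk. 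The normalization $2c=\sqrt{-\kappa}$ is exactly what makes the $X_*$ term land correctly, and the factor $1/\nu=1/N_3$ accounts for the angle function.

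Having transported everything to the disk, I would identify the resulting point with a complex number and compare it to the formula $g=\dfrac{N_1+iN_2+c\zeta(1+N_3)}{c\bar\zeta(N_1+iN_2)+1+N_3}$. The cleanest route is probably to invoke the defining properties (a)--(d) of $\Pi$ from Theorem \ref{thm:gauss}: I would verify that the map $X\mapsto\tilde g$ satisfies the same three geometric conditions (equatorial/hemisphere dichotomy via $\nu$, the geodesic-endpoint condition (c) for horizontal $N$, and equivariance (a) under $\isomzero(\E)$, using that isometries act on the hyperboloid model by the linear action of $\mathrm{SU}_{1,1}\simeq\mathrm{SO}^+_{2,1}$), and then conclude by the uniqueness part of Theorem \ref{thm:gauss}. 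This would avoid a brute-force identity check.

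The main obstacle I anticipate is bookkeeping the two model identifications consistently: getting the stereographic map between the hyperboloid and the disk $\D(2/\sqrt{-\kappa})$, its differential on tangent vectors, and the orientation/marking of $\cE$ all compatible, so that $\frac1\nu(2cX_*+N_*)$ really lands in $\h^2(-1)\simeq\cS^+$ with the correct identification $\sigma(z)=z/c$. In particular one must check that $\tilde g$ is automatically a point of the unit disk (equivalently $\langle\tilde g,\tilde g\rangle=-1$ in $\LL^3$ after the appropriate normalization), which uses $\langle N_*,N_*\rangle=1-\nu^2$ and $\langle X_*,N_*\rangle=0$ together with $4c^2=-\kappa$; this Lorentzian-norm computation, rather than the final comparison, is where the real content lies, and I would carry it out first to confirm $\tilde g$ is well-defined before matching it to $g$.
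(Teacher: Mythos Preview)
Your proposal is correct and follows essentially the same approach as the paper: first verify $\tilde g$ lands in $\h^2(-1)$ via the Lorentzian computation $\langle N_*,N_*\rangle=1-\nu^2$, $\langle X_*,N_*\rangle=0$, $4c^2=-\kappa$, then compare $\tilde\Pi$ with $\Pi$ through the standard isometry $F:\D\to\h^2$ by invoking the uniqueness argument of Theorem \ref{thm:gauss}. The paper streamlines the second step slightly: rather than checking all of (a)--(d), it observes (following the proof of Theorem \ref{thm:gauss}) that once $\tilde\Pi$ satisfies the equivariance property (a), it suffices to verify $\tilde\Pi_O=F\circ\Pi_O$ at the origin, which is a one-line computation.
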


\begin{proof} We first observe that $\tilde{g}$ actually takes values in $\h^2$. Indeed, since  $N_*$ takes values in $\rmT_{X_*(\cdot)}\h^2(\kappa ) = \langle X_*(\cdot)\rangle^\perp \subset\LL^3,$ we have $\langle N_*,X_*\rangle =0$. Also, since $\pi$ is a Riemannian submersion, we have $\langle N,N\rangle=\langle N_*,N_*\rangle+\langle N,\xi\rangle^2$, from where we get  $\langle N_*,N_*\rangle = 1-\nu^2$, and so $\langle\tilde{g},\tilde{g}\rangle = -1$. Finally, since $N$ points upwards, $\nu$ is positive, and therefore $\langle \tilde{g}, X_*\rangle =-1/(2c\nu)<0$, implying that the timelike coordinate of $\tilde{g}$ is  positive. 

Let $F :\mathbb{D} \to  \h^2$ be the natural isometry given by 
$$ 
F(w)= \frac{1}{1-|w|^2}\big(1+|w|^2,2\mbox{Re}(w),2\mbox{Im}(w)\big)
$$
and $\tilde{\Pi}:  \rmU\E \to  \h^2 $ 
the map so that  $\tilde{g}=\tilde{\Pi}\circ N$, that is, for $x\in\E$ and $Z\in\rmU_x\E$, $$\tilde{\Pi}(x,Z)=\tilde{\Pi}_x(Z)=\frac{1}{Z_3} (2c  x_*   + Z_*  ),$$ where $x_*=\pi(x)\in\h^2(\kappa )$, $Z_*=\rmd_x\pi(Z)$ and $Z_3=\langle Z,\xi\rangle$.

Let us prove that $\tilde{\Pi}=F\circ \Pi$, where $\Pi$ is the map given in Theorem \ref{thm:gauss}.  
Since $\tilde{\Pi}$ satisfies hypothesis (a) in abovementioned theorem, following the ideas of the proof of this result, it suffices to check that both maps agree at the origin $O=((\frac{1}{2c},0,0),0)\in\E$. 

Since 
$Z_*=\rmd_O\pi(Z)\in\rmT_{(\frac{1}{2c},0,0)}\h^2=
\langle (\frac{1}{2c},0,0)\rangle ^\perp\subset \LL^3$, 
then its timelike coordinate vanishes, and so 
$$\tilde{\Pi}_{O}(Z)=  \frac{1}{Z_3}(1,Z_1,Z_2).$$
On the other hand, since $\Pi_O(Z)=\frac{1}{1+Z_3}(Z_1+iZ_2) $, it is straightforward to check that $\tilde{\Pi}_{O}(Z)=F(\Pi_O(Z))$.
\end{proof}


\begin{rem}
It is possible then to follow the computations in \cite{fmajm} and find an alternative Weierstrass-type representation for surfaces of critical CMC in $\E$ in terms of the Gauss map, using this model. This method uses the fact that a nowhere antiholomorphic harmonic map into $\h^2$ is (locally) the Gauss map of a CMC 1/2 surface in $\LL^3$ \cite{akni}. In this setting, one first needs to  integrate a differential system to obtain $\eta=2\langle X_z,\xi\rangle$ and then $X_*$ is obtained from $\eta$ and the harmonic map without integration. This gives a representation formula equivalent to \eqref{repfor} but that differs significantly from it.
\end{rem}

\begin{rem}
The condition for being the Gauss map in $\h^2\times\R$  that we obtain in the present paper (being nowhere antiholomorphic) is different from the one obtained in \cite{fmajm} (the existence of Weierstrass data). This difference comes from the fact that in \cite{fmajm} the orientation induced by the Riemann surface $\Sigma$ is not assumed to coincide with the orientation induced by the unit normal for which the angle function of the surface is positive. In the present paper, we always assume that the orientation of $\Sigma$ is coherent with the choice of the unit normal of the immersion.
\end{rem}

\bibliographystyle{plain}
\bibliography{psl}

\end{document}